\newtheorem{theorem}{Theorem}[section]
\newtheorem{corollary}[theorem]{Corollary}
\newtheorem{lemma}[theorem]{Lemma}
\newtheorem{proposition}[theorem]{Proposition}
\theoremstyle{definition}
\newtheorem{definition}[theorem]{Definition}
\newtheorem{remark}[theorem]{Remark}
\theoremstyle{remark}
\renewcommand{\theclaim}{\textup{\theclaim}}
\newtheorem*{acknowledgements}{Acknowledgements}
\numberwithin{equation}{section}
\def\openone
\newbox\ipbox
\newcommand{\ip}[2]{\left\langle #1\, , \,#2\right\rangle}
\newcommand{\diracb}[1]{\left\langle #1\mathrel{\mathchoice

{\setbox\ipbox=\hbox{$\displaystyle \left\langle\mathstrut
#1\right.$}

\vrule height\ht\ipbox width0.25pt depth\dp\ipbox}

{\setbox\ipbox=\hbox{$\textstyle \left\langle\mathstrut
#1\right.$}

\vrule height\ht\ipbox width0.25pt depth\dp\ipbox}

{\setbox\ipbox=\hbox{$\scriptstyle \left\langle\mathstrut
#1\right.$}

\vrule height\ht\ipbox width0.25pt depth\dp\ipbox}

{\setbox\ipbox=\hbox{$\scriptscriptstyle \left\langle\mathstrut
#1\right.$}

\vrule height\ht\ipbox width0.25pt depth\dp\ipbox}

}\right. }
\newcommand{\dirack}[1]{\left. \mathrel{\mathchoice

{\setbox\ipbox=\hbox{$\displaystyle \left.\mathstrut
#1\right\rangle$}

\vrule height\ht\ipbox width0.25pt depth\dp\ipbox}

{\setbox\ipbox=\hbox{$\textstyle \left.\mathstrut
#1\right\rangle$}

\vrule height\ht\ipbox width0.25pt depth\dp\ipbox}

{\setbox\ipbox=\hbox{$\scriptstyle \left.\mathstrut
#1\right\rangle$}

\vrule height\ht\ipbox width0.25pt depth\dp\ipbox}

{\setbox\ipbox=\hbox{$\scriptscriptstyle \left.\mathstrut
#1\right\rangle$}

\vrule height\ht\ipbox width0.25pt depth\dp\ipbox}

} #1\right\rangle}
\newcommand{\cj}[1]{\overline{#1}}
\newcommand{\bz}{\mathbb{Z}}
\newcommand{\br}{\mathbb{R}}
\newcommand{\bt}{\mathbb{T}}
\newcommand{\bn}{\mathbb{N}}
\def\blfootnote{\xdef\@thefnmark{}\@footnotetext}
\def\T{\mathcal{T}}
\def\F{\mathcal{F}}
\def\H{\mathcal{H}}
\def\-{^{-1}}
\def\ty{\emptyset}
\def\C{\mathbb{C}}
\def\Z{\mathbb{Z}}
\begin{document}

\title[Wavelet representations and Martin bondaries]{Decomposition of wavelet representations and Martin boundaries}
\author{Dorin Ervin Dutkay}
\blfootnote{Swedish Foundation for International Cooperation in Research and Higher Education (STINT) and Swedish Research Council (grant 2007-6338).}
\address{[Dorin Ervin Dutkay] University of Central Florida\\
	Department of Mathematics\\
	4000 Central Florida Blvd.\\
	P.O. Box 161364\\
	Orlando, FL 32816-1364\\
U.S.A.\\} \email{ddutkay@mail.ucf.edu}

\author{Palle E.T. Jorgensen}
\address{[Palle E.T. Jorgensen]University of Iowa\\
Department of Mathematics\\
14 MacLean Hall\\
Iowa City, IA 52242-1419\\}\email{jorgen@math.uiowa.edu}

\author{Sergei Silvestrov}
\address{[Sergei Silvestrov] Centre for Mathematical Sciences\\ 
Lund University\\
Box 118, SE-221 00 Lund, Sweden\\
and Division of Applied Mathematics\\ 
School of Education, Culture and Communication\\ M{\"a}lardalen University, Box 883, 721 23\\ V{\"a}ster{\aa}s, Sweden
}
\email{Sergei.Silvestrov@math.lth.se}

\thanks{} 
\subjclass[2000]{65T60,47A67,31C35}
\keywords{irreducible representation, wavelet, Martin boundary, harmonic function}

\begin{abstract}
 We study a decomposition problem for a class of unitary representations associated with wavelet analysis, wavelet representations, but our framework is wider and has applications to multi-scale expansions arising in dynamical systems theory for non-invertible endomorphisms.

Our main results offer a direct integral decomposition for the general wavelet representation, and we solve a question posed by Judith Packer. This entails a direct integral decomposition of the general wavelet representation. We further give a detailed analysis of the measures contributing to the decomposition into irreducible representations.  We prove results for associated Martin boundaries, relevant for the understanding of wavelet filters and induced random walks, as well as classes of harmonic functions.

   Our setting entails representations built from certain finite-to-one endomorphisms $r$ in compact metric spaces $X$, and we study their dilations to automorphisms in induced solenoids. Our wavelet representations are covariant systems formed from the dilated automorphisms. They depend on assigned measures $\mu$ on $X$. It is known that when the data $(X, r, \mu)$ are given the associated wavelet representation is typically reducible. By introducing wavelet filters associated to $(X, r)$ we build random walks in $X$, path-space measures, harmonic functions, and an associated Martin boundary. 

    We construct measures on the solenoid $(X_\infty, r_\infty)$, built from $(X, r)$. We show that $r_\infty$ induces unitary operators $U$ on Hilbert space $\H$ and  representations $\pi$  of the algebra $L^\infty(X)$ such that the pair $(U, r_\infty)$, together with the corresponding representation $\pi$ forms a crossed-product in the sense of $C^*$-algebras. We note that the traditional wavelet representations fall within this wider framework of $(\H, U, \pi)$ covariant crossed products. 
\end{abstract}
\maketitle \tableofcontents

\section{Introduction}

We study a decomposition problem for a class of unitary representations associated with wavelet analysis, even though our framework is wider and has applications outside multi-scale wavelet expansions, see details below.
    One powerful tool in the construction of families of multi-scale wavelets (see e.g., \cite{ Dau92, Ma98, Jor06} ) is an introduction of a finite system of filters. Here we understand the notion of ``filter'' in the sense of signal processing. In this context, each filter will be a function of a complex number  $z$ (a frequency variable), and for many purposes it is enough to consider only a phase of $z$, so we may restrict attention to the case when  points $z$ are in the $1$-torus. Each function $m_i(z)$, $i=0, 1,\dots, N-1$ typically supports a frequency band. With these conventions, in wavelet considerations, the function  $m_0$ represents a low-pass filter, i.e., passing low frequency signals. When a suitable Fourier expansion is introduced for the filter functions we arrive at the masking coefficients that determine some particular wavelet. This framework includes both traditional wavelet systems in the Hilbert space $L^2(\br^d)$ in some number of dimensions $d$, as well as orthonormal wavelet bases on fractals, as studied by two of the present authors, see \cite{DuJo07, DuJo06, BK10}.

     {\it Analysis of filters}.  Continuing with filter functions $m_0$ on the circle group $\bt$, we consider, for every fixed $z$ in $\bt$, then the absolute squared $m_0$ with some normalization $W(z) = |m_0( z )|^2/N$  . With this we then get a family of probability distributions on the set of $N$ solutions in $\bt$ to the equation $w^N = z$ (see \eqref{eqsi1}--\eqref{eqw2} below). There will be a solution $w$ in each of the $N$ frequency-bands.
In the special case when $N = 2$, the function $m_0$, or the system of functions $m_0$ and $m_1$, are called a quadrature mirror filter (QMF). The reason for this is that two other operations, down-sampling, and up-sampling, allow one to build a discrete wavelet algorithm with dual filters, the dual one is the ``mirror''.
 
   Here we have adopted a more general framework: Instead of $\bt$ we will consider a compact metric space $X$, and our filter functions will be functions from $X$ into the complex plane. We further generalize the choice of endomorphism $r(z)=  z^N$, considering here instead an endomorphism $r$ in $X$ which is onto, and for which each pre-image of points in $X$ is a finite subset; so finite-to-one endomorphisms.
In both the traditional wavelet case, and in the more general framework, we end up with dynamical systems in a solenoid.

    When the pair $(X, r)$ is given as specified, there is a standard way of building a solenoid $X_\infty = X_\infty(r)$  over $X$ (see \eqref{eqi4_1} and \eqref{eqi4_2} below).
    There are several advantages working with the solenoid:
    
    \begin{enumerate}
\item The endomorphism $r$ in $X$ induces an automorphism $r_\infty$ in $X_\infty$(see \eqref{eqi4_2}).
\item The transition probabilities $W$ on $X$ induce a random walk on the solenoid which encodes properties of the representations induced by the prescribed wavelet filters \cite{DuJo07}; these representations are known as {\it wavelet representations} in the literature.
\item With this random walk we are able to compute transition probabilities, harmonic functions, and associated Martin boundaries.
\item Points in the solenoid may be thought of as random-walk paths; for each point $x$ in $X$, we will have an infinite random-walk path, represented as a subset of the solenoid.
\item Fixing the function $W = |m_0(\cdot)|^2/\#r^{-1}(r(\cdot))$  on $X$ we get a path space measure $P$. We will be interested in the family of measures $(P_x)$, $x$ in $X$, with $P_x$ conditioned on the set of paths starting at $x$ (see \eqref{eqi4_5}).
 
\end{enumerate}
 
To specify wavelet representations we must then also have a prescribed measure $\mu$ on $X$ (see Definition \ref{def1.1}). For the theory of wavelet representations, see for example \cite{HL08, Lar07a, Lar07b, Pa04, Pa08a, Pa08b}. For an early treatment of transfer operators $R_W$ in multi-scale wavelets, see \cite{Jor01}. The relevance of the $R_W$ harmonic functions (i.e., solutions $h$ to $R_W h = h$) in the decomposition theory for the corresponding  wavelet representation $\rho_W$ was pointed out there. Specifically, \cite{Jor01} has the idea of computing operators in the commutant of $\rho_W$ from  $R_W$-harmonic functions.

For computations of $R_W$-harmonic functions of concrete wavelets, see also \cite{BrJo02}. The theory of $R_W$ harmonic functions was developed recently by a number of other authors; in \cite{Dut06} is introduced the study of intertwining operators for pairs of wavelet representations.

 In our present wider context, the $R_W$-harmonic functions enter in equation \eqref{eqw2} below, and they underlie our considerations in Section 4 regarding Martin boundary.

In our present setting, in an earlier paper \cite{DuSi10} a necessary condition was given on the data $(X, r, \mu)$ for when the associated wavelet representation is reducible. For the reader's convenience, we have stated it as Theorem \ref{thi5} below.
 
{\it Summary of results}. Our main result here (Theorem \ref{thdec}) offers a direct integral decomposition for the general wavelet representation.
    This completely solves a question posed by Professor Judith Packer, see e.g., \cite{Pa08b,MR2609543,MR2593341,MR2563776,MR2559716}. In our Theorem \ref{thdec} we offer a direct integral decomposition of the general wavelet representation, and our Theorem \ref{th1.2} deals with a derivation of the measures contributing to the decomposition.
    Our results yield as clean a decomposition for the general wavelet representation into irreducibles as is realistically feasible.

 In Section 4, we have included a result on an associated Martin boundary. Even though it is not used directly, it is certainly relevant for the understanding of our random-walk harmonic functions, and the question about the Martin boundary naturally presents itself. In fact, we obtained our result in Section 4 in response to a question asked of us by Erin Pearse.

     Indeed, there are some intriguing connections to random-walk models studied recently in papers by one of the present authors and E. Pearse (see e.g.,  \cite{JoPe10} and \cite{DuJo10}. These are computations for infinite weighted graphs $G$. In both instances we get transition probabilities and associated random walks.

     If $G$ is a graph as in \cite{JoPe10}, the condition for the context of these studies is that, for every vertex in $G$, there are only a finite number of transitions possible to neighboring vertices. But the Markov processes in \cite{JoPe10} are reversible, and therefore the associated boundaries are more amenable.
     By contrast, our transition processes are non-reversible, except for some trivial special cases. More specifically, the reason our random walks are not reversible, is that transition happens from one point $x$ to one of the distinct subnodes $y$ (neighbors) were $y$ will be one of the finite number of solutions to the equation  $r(y)=x$. Hence these transitions never return, unless we consider cases when $x$ might be a periodic point, in which case they might return.

\section{Measures on the solenoid}

Before turning to our direct integral result, we begin with some preliminaries regarding measures on solenoids. Since our starting point is a given finite-to-one endomorphism $r$ in a compact metric space $X$, it is then natural to look for a way of corresponding to this a unitary operator $U$ in a Hilbert space $\H$, such that $U$ together with $(X, r)$ satisfy a covariance relation; see (i) in Theorem \ref{thi1} below. The introduction of suitable measures on the associated solenoid $(X_\infty, r_\infty)$, built from $(X, r)$, then gets us a representation $\pi$  of the algebra $L^\infty(X)$ such that $U$, together with $r_\infty$, form a crossed-product in the sense of $C^*$-algebras. This is possible since $r_\infty$ is an automorphism.  We will refer to a crossed-product system $(\H, U, \pi)$ as a wavelet representation.

   Indeed, in \cite{DuJo07}, we proved that the traditional wavelet representations fall within this wider framework of $(\H, U, \pi)$ covariant crossed products.
Specifically, in the special case when $X = \bt$, and the endomorphism  $r$  is  just the power mapping  $r(z)=  z^N$  (for a fixed integer $N > 1$), then it can be seen that a covariant crossed products indeed specializes to a unitary representation of a corresponding $N$-Baumslag-Solitar group; see e.g., \cite{DuJo08c, Dut06}.  Even in the case of these classical Baumslag-Solitar groups, our understanding of the unitary representations and their decompositions is so far only partial.
 
\begin{definition}\label{def1.1}
Let $X$ be a compact metric space and $r:X\rightarrow X$ be a finite-to-one, onto, Borel measurable map. Let $\mu$ be a {\it strongly invariant} Borel probability measure on $X$, i.e.
\begin{equation}
\int f\,d\mu=\int\frac{1}{\#r^{-1}(x)}\sum_{r(y)=x}f(y)\,d\mu(x),
\label{eqsi1}
\end{equation}
for any bounded Borel function on $X$. 

A function $m_0$ on $X$ is called a {\it quadrature mirror filter (QMF)} if 
\begin{equation}
\frac{1}{\#r^{-1}(x)}\sum_{r(y)=x}|m_0(y)|^2=1,\quad(x\in X)
\label{eqqmf1}
\end{equation}

In what follows we will assume that:
\begin{equation}
\mbox{ the set of zeroes for $m_0$ has $\mu$-measure zero.}
\label{eqzer1}
\end{equation}

Given a QMF $m_0$ we define 
\begin{equation}
W(x)=\frac{|m_0(x)|^2}{\#r^{-1}(r(x))},\quad(x\in X).
\label{eqw1}
\end{equation}
Then the function $W$ satisfies the following equation:
\begin{equation}
\sum_{r(y)=x}W(y)=1,\quad(x\in X).
\label{eqw2}
\end{equation}
Equation \eqref{eqw2} can be interpreted as an assignment of transition probabilities: the probability of transition from $x$ to $y\in r^{-1}(x)$ is equal to $W(y)$. 

A function $h$ on $X$ is called {\it $R_W$-harmonic} if 
\begin{equation}
\sum_{r(y)=x}W(y)h(y)=h(x),\quad(x\in X).
\label{eqrwh}
\end{equation}
\end{definition}

\begin{theorem}\label{thi1}\cite{DuJo07}
There exists a Hilbert space $\H$, a unitary operator $U$ on $\H$, a representation $\pi$ of $L^\infty(X)$ on $\H$ and an element $\varphi$ of $\H$ such that
\begin{enumerate}
\item 
(Covariance) $U\pi(f)U^{-1}=\pi(f\circ r)$ for all $f\in L^\infty(X)$. 
\item (Scaling equation) $U\varphi=\pi(m_0)\varphi$
\item (Orthogonality) $\ip{\pi(f)\varphi}{\varphi}=\int f\,d\mu$ for all $f\in L^\infty(X)$. 
\item (Density) $\{U^{-n}\pi(f)\varphi\,|\,n\in\bn, f\in L^\infty(X)\}$ is dense in $\H$.

\end{enumerate}
Moreover they are unique up to isomorphism.

\end{theorem}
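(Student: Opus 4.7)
\medskip
\noindent\textbf{Proof proposal.}
The plan is to build a concrete model on the solenoid $X_\infty(r)$, verify (i)--(iv) directly, and then obtain uniqueness from an inner-product computation that reduces, via (i)--(iii), to integrals against $\mu$ alone, with (iv) providing the cyclic/density closure. This is essentially the solenoid construction used throughout \cite{DuJo07}; I would present it in the following order.

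\emph{Measure on the solenoid.} Let $X_\infty=\{(x_n)_{n\ge 0}\subset X : r(x_{n+1})=x_n\}$, with coordinate projections $\pi_n$ and the induced automorphism $r_\infty(x_0,x_1,\dots)=(r(x_0),x_0,x_1,\dots)$. Write $m_0^{(n)}:=\prod_{k=0}^{n-1}m_0\circ r^k$ and define cylinder measures $\nu_n$ on $X$ by $d\nu_n=|m_0^{(n)}|^2\,d\mu$. The compatibility needed for the Kolmogorov extension is
\begin{equation*}
\int_{r^{-1}(B)}|m_0^{(n)}|^2\,d\mu=\int_B |m_0^{(n-1)}|^2\,d\mu,
\end{equation*}
which follows by applying strong invariance \eqref{eqsi1} and then the QMF identity \eqref{eqqmf1} inside the sum over $r^{-1}(x)$. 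Thus there is a unique probability measure $\mu_\infty$ on $X_\infty$ with $\int(f\circ\pi_n)\,d\mu_\infty=\int f\,|m_0^{(n)}|^2\,d\mu$ for all bounded Borel $f:X\to\C$.

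\emph{Construction of $(\H,U,\pi,\varphi)$.} Set $\H=L^2(X_\infty,\mu_\infty)$, $\pi(f)\xi=(f\circ\pi_0)\xi$, $\varphi\equiv 1$, and $(U\xi)(x):=m_0(\pi_0(x))\,\xi(r_\infty(x))$. Condition (iii) is immediate from the $n=0$ case of the marginal formula, since $m_0^{(0)}=1$. Condition (ii) is the evaluation $(U\varphi)(x)=m_0(\pi_0(x))=\pi(m_0)\varphi(x)$. For (i) the relation $\pi_0\circ r_\infty=r\circ\pi_0$ gives $U\pi(f)=\pi(f\circ r)U$ at once. Unitarity of $U$ reduces, via the cylinder formula, to checking $|m_0(r^{n-1}(y))|^2|m_0^{(n-1)}(y)|^2=|m_0^{(n)}(y)|^2$, which is the telescoping identity built into the definition of $m_0^{(n)}$ and uses \eqref{eqqmf1} only through the consistency step above. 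For (iv): iterating (i) and (ii) gives $U^{-n}\pi(f)\varphi = (f\circ\pi_n)\cdot \overline{m_0^{(n)}\circ\pi_n}^{-1}\cdot(\cdots)$, up to the finite-cylinder structure, so these vectors span the $\sigma$-algebra generated by $\{\pi_n\}_{n\ge 0}$, and a standard Stone--Weierstrass/monotone class argument yields density in $L^2(X_\infty,\mu_\infty)$.

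\emph{Uniqueness.} Given another quadruple $(\H',U',\pi',\varphi')$ with the same four properties, use (i) and (ii) to reduce every inner product $\langle U^{-m}\pi(f)\varphi,\,U^{-n}\pi(g)\varphi\rangle$ to
\begin{equation*}
\int (g\circ r^{\,|m-n|})\,\overline f\,\,m_0^{(|m-n|)}\,d\mu\quad\text{or its conjugate,}
\end{equation*}
depending on the sign of $m-n$, by repeated application of the covariance, the scaling equation, and the orthogonality relation. Since the same formula holds in the primed system, the map $W: U^{-n}\pi(f)\varphi\mapsto U'^{\,-n}\pi'(f)\varphi'$ is well-defined and isometric on a dense subspace by (iv); extending by continuity produces a unitary intertwining the two systems and sending $\varphi$ to $\varphi'$.

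\emph{Main obstacle.} The substantive step is the consistency verification that produces $\mu_\infty$: one must combine strong invariance \eqref{eqsi1} with the QMF identity \eqref{eqqmf1} in the right order, and this is exactly where the hypotheses of Definition \ref{def1.1} are used. Everything else is formal manipulation on cylinders and a cyclic-vector argument of GNS type.
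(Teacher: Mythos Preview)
Your proposal is correct and follows essentially the same solenoid-model approach that the paper sketches (equations \eqref{eqi4_1}--\eqref{eqi4_9} and Theorem \ref{thi5}), which it cites from \cite{DuJo07} rather than proving in detail. The only cosmetic difference is that you assemble $\mu_\infty$ via Kolmogorov consistency on the $\theta_n$-marginals $d\nu_n=|m_0^{(n)}|^2\,d\mu$, whereas the paper builds it from the fiberwise path measures $P_x$ on $\Omega_x$; the computation of $P(z_m\in A_0)$ in the proof of Theorem \ref{thdec} confirms that these coincide.
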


\begin{definition}
We call the system $(\H,U,\pi,varphi)$ in Theorem \ref{th1}, the {\it wavelet representation} associated to the function $m_0$.

\end{definition}

We recall some facts from \cite{DuJo07}. The wavelet representation can be realized on a solenoid as follows.
Let 
\begin{equation}
X_\infty:=\left\{(x_0,x_1,\dots)\in X^{\bn}\,|\, r(x_{n+1})=x_n\mbox{ for all }n\geq 0\right\}.
\label{eqi4_1}
\end{equation}
We call $X_\infty$ the {\it solenoid} associated to the map $r$.

On $X_\infty$ consider the $\sigma$-algebra generated by cylinder sets. 
Let $r_\infty:X_\infty\rightarrow X_\infty$
\begin{equation}
r_\infty(x_0,x_1,\dots)=(r(x_0),x_0,x_1,\dots)\mbox{ for all }(x_0,x_1,\dots)\in X_\infty.
\label{eqi4_2}
\end{equation}
Then $r_\infty$ is a measurable automorphism on $X_\infty$. 

Define $\theta_0:X_\infty\rightarrow X$, 
\begin{equation}
\theta_0(x_0,x_1,\dots)=x_0.
\label{eqi4_3}
\end{equation}

The following commutative diagram summarizes the relation between the maps $r,r_\infty,\theta_0$:
$$\begin{array}{rcl}
X_\infty&\stackrel{r_\infty}{\rightarrow}&X_\infty\\
\theta_0\downarrow& &\downarrow\theta_0\\
X&\stackrel{r}{\rightarrow}&X\end{array},\quad \theta_0\circ r_\infty=r\circ\theta_0$$

Define for $m\geq0$ the projection $\theta_m:X_\infty\rightarrow X$, 
$$\theta_m(x_0,x_1,\dots)=x_m.$$

The measure $\mu_\infty$ on $X_\infty$ will be defined by constructing some path measures $P_x$ on the fibers $\Omega_x:=\{(x_0,x_1,\dots)\in X_\infty\,|\, x_0=x\}.$

On $\Omega_{x_0}$ we will consider the infinite product topology which is defined by the basis of open sets: for $n\geq 0$, $x_1,\dots,x_n\in X$ with $r(x_{j+1})=x_{j}$, $j\in\{0,\dots,n-1\}$,

\begin{equation}
V_{x_0,\dots, x_n}:=\{(z_0,z_1,\dots)\in\Omega_{x_0} : z_0=x_0,\dots, z_n=x_n\}.
\label{eqvx0}
\end{equation}
With this topology $\Omega_{x_0}$ is a compact Hausdorff space.

Let 
$$c(x):=\#r^{-1}(r(x)),\quad W(x)=|m_0(x)|^2/c(x),\quad(x\in X).$$
Then 
\begin{equation}
\sum_{r(y)=x}W(y)=1,\quad(x\in X)
\label{eqi4_4}
\end{equation}
$W(y)$ can be thought of as the trasition probability from $x=r(y)$ to one of its roots $y$.

For $x\in X$, the path measure $P_x$ on $\Omega_x$ is defined on cylinder sets by
\begin{equation}
P_x(\{(x_n)_{n\geq0}\in\Omega_x\,|\, x_1=z_1,\dots,x_n=z_n\})=W(z_1)\dots W(z_n)
\label{eqi4_5}
\end{equation}
for any $z_1,\dots, z_n\in X$.

This value can be interpreted as the probability of the random walk to go from $x$ to $z_n$ through the points $x_1,\dots,x_n$.

Next, define the measure $\mu_\infty$ on $X_\infty$ by 
\begin{equation}
\int f\,d\mu_\infty=\int_{X}\int_{\Omega_x}f(x,x_1,\dots)\,dP_{x}(x,x_1,\dots)\,d\mu(x)
\label{eqi4_7}
\end{equation}
for bounded measurable functions on $X_\infty$.

Consider now the Hilbert space $\H=L^2(\mu_\infty)$. Define the operator 
\begin{equation}
U\xi=(m_0\circ\theta_0)\,\xi\circ r_\infty,\quad(\xi\in L^2(X_\infty,\mu_\infty)).
\label{eqi4_8}
\end{equation}

Define the representation of $L^\infty(X)$ on $\H$ 
\begin{equation}
\pi(f)\xi=(f\circ\theta_0)\,\xi,\quad(f\in L^\infty(X),\xi\in L^2(X_\infty,\mu_\infty)).
\label{eqi4_9}
\end{equation}

Let $\varphi=1$ be the constant function $1$ on $X_\infty$.

\begin{theorem}\label{thi5}\cite{DuJo07}
Suppose $m_0$ is non-singular, i.e., $\mu(\{x\in X\,|\, m_0(x)=0\})=0$. Then the data $(\H,U,\pi,\varphi)$ forms the wavelet representation associated to $m_0$.
\end{theorem}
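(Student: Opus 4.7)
My plan is to verify directly that the concretely defined quadruple $(\H,U,\pi,\varphi)=(L^2(\mu_\infty),U,\pi,1)$ satisfies the four axioms (i)--(iv) of Theorem \ref{thi1}, and then invoke the uniqueness clause of that theorem. The non-singularity hypothesis $\mu(\{m_0=0\})=0$ will enter at two places: promoting $U$ from an isometry to a genuine unitary, and keeping the finite cocycles $\prod_{k=0}^{n-1}m_0\circ r^k$ nonvanishing $\mu$-a.e.\ so that the density argument goes through.

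The heart of the matter is the isometry identity $\|U\xi\|^2=\|\xi\|^2$, where the QMF condition \eqref{eqqmf1} and strong invariance \eqref{eqsi1} do their joint work. I would first disintegrate via \eqref{eqi4_7} to get
\begin{equation*}
\|U\xi\|^2=\int_{X}|m_0(x)|^2\int_{\Omega_x}|\xi(r(x),x,x_1,\dots)|^2\,dP_x(x,x_1,\dots)\,d\mu(x).
\end{equation*}
Then, apply \eqref{eqsi1} to the outer integral with a new base point $z$, and substitute $|m_0(x)|^2/\#r^{-1}(z)=W(x)$ from \eqref{eqw1} to rewrite the right-hand side as
\begin{equation*}
\int_{X}\sum_{r(x)=z}W(x)\int_{\Omega_x}|\xi(z,x,x_1,\dots)|^2\,dP_x(x,x_1,\dots)\,d\mu(z).
\end{equation*}
The inner sum is the first-step Chapman--Kolmogorov decomposition of $P_z$ built into \eqref{eqi4_5}, so it collapses to $\int_{\Omega_z}|\xi|^2\,dP_z$, and a final use of \eqref{eqi4_7} returns $\|\xi\|^2$. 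This step, which threads the QMF identity, strong invariance, and the fibered construction of $\mu_\infty$ through a single change of variable, is the main obstacle. Once it is in hand, unitarity follows because the formal inverse $U^{-1}\eta=\eta\circ r_\infty^{-1}/(m_0\circ\theta_1)$ is well-defined $\mu_\infty$-a.e.\ by non-singularity of $m_0$, and a symmetric computation shows it is an isometry as well.

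The remaining axioms are essentially bookkeeping. The scaling equation $U\varphi=\pi(m_0)\varphi$ is immediate from $\varphi\equiv 1$; orthogonality reduces to $\int f\circ\theta_0\,d\mu_\infty=\int f\,d\mu$, which follows from \eqref{eqi4_7} since each $P_x$ is a probability measure; and the covariance relation is a one-line consequence of $\theta_0\circ r_\infty=r\circ\theta_0$. For density, iterating $\theta_k\circ r_\infty=\theta_{k-1}$ for $k\geq 1$ yields
\begin{equation*}
U^n(h\circ\theta_n)=\Big(\prod_{k=0}^{n-1}m_0\circ r^k\Big)\circ\theta_0\,\cdot\,h\circ\theta_0\in\pi(L^\infty(X))\varphi
\end{equation*}
for $h\in L^\infty(X)$, so every $h\circ\theta_n$ sits in $U^{-n}\pi(L^\infty(X))\varphi$. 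Since the constraint $r(x_{k+1})=x_k$ forces each cylinder function on $X_\infty$ to depend on only one of the $\theta_n$, and such functions are dense in $L^2(\mu_\infty)$, density of $\bigcup_n U^{-n}\pi(L^\infty(X))\varphi$ follows. With all four axioms verified, the uniqueness clause of Theorem \ref{thi1} identifies $(\H,U,\pi,\varphi)$ with the abstract wavelet representation associated to $m_0$.
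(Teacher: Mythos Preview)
The paper does not actually prove this theorem: it is stated with a citation to \cite{DuJo07} and no argument is given in the present paper. Your proposal supplies a correct direct verification of axioms (i)--(iv) of Theorem~\ref{thi1} for the concrete model $(L^2(\mu_\infty),U,\pi,1)$, which is exactly the kind of argument one finds in \cite{DuJo07}.

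A few minor remarks. Your isometry computation for $U$ is clean and correctly threads strong invariance and the QMF identity through the Chapman--Kolmogorov step for $P_z$. In the unitarity step, the place non-singularity really bites is not quite that $V\eta:=\eta\circ r_\infty^{-1}/(m_0\circ\theta_1)$ is well defined---in fact $m_0\circ\theta_1$ vanishes only on a $\mu_\infty$-null set regardless, since $W(y)=0$ whenever $m_0(y)=0$---but rather that $UV=\mathrm{id}$ requires cancelling $m_0\circ\theta_0$, which is nonzero $\mu_\infty$-a.e.\ precisely when $m_0$ is non-singular. Your density argument implicitly uses $m_0\in L^\infty(X)$ so that $g_n h\in L^\infty(X)$; this is part of the standing hypotheses (otherwise $\pi(m_0)$ in the scaling equation would not make sense). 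The observation that every cylinder function on $X_\infty$ factors through a single $\theta_n$ is the right one and finishes the argument.
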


\begin{theorem}\label{th1}\cite{DuSi10}
Suppose $r:(X,\mu)\rightarrow (X,\mu)$ is ergodic. Assume $|m_0|$ is not constant $1$ $\mu$-a.e., non-singular, i.e., $\mu(m_0(x)=0)=0$, and $\log |m_0|^2$ is in $L^1(X)$. Then the wavelet representation $(\H,U,\pi,\varphi)$ is reducible. 
\end{theorem}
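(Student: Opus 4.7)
The plan is to realize a nontrivial element of the commutant of $(U,\pi)$ as multiplication by a nonconstant $r_\infty$-invariant bounded function on the solenoid. Using the disintegration of $L^2(X_\infty,\mu_\infty)$ along the fibration $\theta_0:X_\infty\to X$ together with the covariance $U\pi(f)U^{-1}=\pi(f\circ r)$, one sees that multiplication by $h\in L^\infty(X_\infty,\mu_\infty)$ automatically commutes with every $\pi(f)$, and commutes with $U$ precisely when $h\circ r_\infty=h$ $\mu_\infty$-a.e. The theorem therefore reduces to producing one such $h$ which is not a.e.\ constant.

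The first step is to extract a strictly negative log-integral from the hypotheses. The strong invariance \eqref{eqsi1} combined with the QMF identity \eqref{eqqmf1} gives $\int_X|m_0|^2\,d\mu=1$; since $|m_0|$ is not $\mu$-a.e.\ equal to $1$, strict concavity of $\log$ together with Jensen's inequality yields
\[ \alpha \;:=\; \int_X\log|m_0|^2\,d\mu \;<\; 0. \]
The $L^1$-hypothesis on $\log|m_0|^2$ and ergodicity of $r$ then allow Birkhoff's theorem to conclude
\[ \frac{1}{n}\sum_{k=0}^{n-1}\log|m_0(r^k(x))|^2 \;\longrightarrow\; \alpha \qquad \mu\text{-a.e.}, \]
so that the cocycle $M_n(x):=\prod_{k=0}^{n-1}|m_0(r^k(x))|^2$ decays exponentially to $0$ for $\mu$-a.e.\ $x$.

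The nonconstant invariant function will then be built from the Markov-chain structure of $(x_n)$ under $\mu_\infty$, whose transitions are prescribed by $W$ as in \eqref{eqi4_5}. Given a bounded $R_W$-harmonic $h$ on $X$, the sequence $N_n:=h(x_n)$ is a uniformly bounded martingale, hence has a $\mu_\infty$-a.s.\ limit $H(x_0,x_1,\ldots):=\lim_n h(x_n)$; this $H$ is tail-measurable, and since $r_\infty$ merely prepends $r(x_0)$ and thus preserves $\bigcap_n\sigma(x_n,x_{n+1},\ldots)$, the function $H$ is automatically $r_\infty$-invariant $\mu_\infty$-a.e. The main obstacle is to produce a bounded $R_W$-harmonic $h$ whose martingale limit $H$ is genuinely nonconstant, and this is where $\alpha<0$ is essential: the exponential decay of $M_n$ forces the $W$-random walk on the preimage tree of $r$ to be strongly transient, so that the path measures $P_x$ asymptotically segregate sequences into disjoint escape regimes. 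These regimes correspond to nontrivial atoms of the tail $\sigma$-algebra, and a Martin-boundary/Poisson-integral construction (the general framework of which is developed in Section~4) delivers the required nonconstant bounded harmonic function, and therefore the nonconstant invariant $H$. Multiplication by this $H$ is the sought nonscalar element of the commutant and proves reducibility.
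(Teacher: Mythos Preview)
Your reduction to finding a nonconstant $r_\infty$-invariant function in $L^\infty(X_\infty,\mu_\infty)$ is correct, and your first two steps (Jensen giving $\alpha<0$, Birkhoff giving $M_n(x)\to 0$ exponentially $\mu$-a.e.) coincide with the argument reproduced in the proof of Theorem~\ref{thdec}. The gap is in your final paragraph: you never actually produce the nonconstant bounded $R_W$-harmonic function you need. The assertions that $\alpha<0$ forces ``strong transience,'' that this creates ``disjoint escape regimes,'' and that these give ``nontrivial atoms of the tail $\sigma$-algebra'' are heuristics, not arguments. Your appeal to Section~4 does not close the gap either: that section identifies the Martin boundary of the random walk on a single tree $\mathcal T(x_0)$ with $\Omega_{x_0}$ and represents \emph{nonnegative} harmonic functions on $\mathcal T(x_0)$ by measures on $\Omega_{x_0}$, but it does not construct a nonconstant \emph{bounded} $R_W$-harmonic function on $X$, nor does it show the Poisson boundary is nontrivial. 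So the crucial existence step is simply missing.

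The argument from \cite{DuSi10}, reproduced at the start of the proof of Theorem~\ref{thdec}, fills exactly this hole by a concrete measure-theoretic construction rather than harmonic-function existence. After Birkhoff, one applies Egorov to get a set $A_0$ of positive (but not full) measure on which the products $M_m(x)$ are uniformly $\le b^m$ for some $b<1$ and all large $m$. The strong invariance of $\mu$ then gives $\sum_m \mu_\infty(\{z_m\in A_0\})<\infty$, and Borel--Cantelli yields that $\mu_\infty$-a.e.\ path has $z_n\in B_0:=X\setminus A_0$ for all large $n$. The set $A_\infty=\{(z_n): z_n\in B_0\text{ for all }n\ge 0\}$ then satisfies $r_\infty^{-1}(A_\infty)\subsetneq A_\infty$ with $\bigcup_n r_\infty^n(A_\infty)=X_\infty$ and $\bigcap_n r_\infty^n(A_\infty)=\emptyset$ up to null sets; hence $\mathcal F=r_\infty(A_\infty)\setminus A_\infty$ is a fundamental domain of positive measure. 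From this one obtains nonconstant $r_\infty$-invariant functions directly (e.g.\ saturate any proper positive-measure subset of $\mathcal F$ under the $r_\infty$-orbit), which is precisely the object your outline promised but did not deliver.
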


We will be interested in the decomposition of the wavelet representation into irreducibles. We need a few more notations and lemmas.
\begin{definition}\label{def2}
Define 
$$\tilde m_0=1,\quad \tilde m_n=(m_0\circ\theta_0)\cdot (m_0\circ\theta_0\circ\ r_\infty)\dots(m_0\circ\theta_0\circ r_\infty^{n-1}),\mbox{ for }n\geq 1,$$
$$\tilde m_n=\frac{1}{(m_0\circ\theta_0\circ\ r_\infty^{-1})\dots (m_0\circ\theta_0\circ r_\infty^{n})},\mbox{ for }n<0.$$

The function $\tilde m: X_{\infty}\times \Z\to \C^{\ast}$ defined by $\tilde m(x,n)=\tilde m_n(x)$ gives a one-cocycle for the action of $\Z$ on $X_{\infty}$ determined by $r_{\infty}.$
\end{definition}

The fact that $U$ is an isometry implies the following lemma.
\begin{lemma}\label{lem3}
For $\xi\in L^2(X_\infty,\mu_\infty)$:
$$\int \xi\,d\mu_\infty=\int |\tilde m_n|^2\xi\circ r_\infty^n\,d\mu_\infty,\quad(n\in\bz).$$
\end{lemma}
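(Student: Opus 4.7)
The key idea is to exploit that $U$ is unitary on $L^2(\mu_\infty)$, so $\langle U^n f, U^n g\rangle=\langle f,g\rangle$ for every $n\in\bz$ and all $f,g\in L^2(\mu_\infty)$. The first step is to establish the iterated version of the defining relation \eqref{eqi4_8}: for every $n\in\bz$ and every $\xi\in L^2(\mu_\infty)$,
$$U^n\xi=\tilde m_n\cdot(\xi\circ r_\infty^n).$$
I would prove this by induction on $|n|$. The case $n=1$ is \eqref{eqi4_8}. For the step $n\mapsto n+1$ with $n\geq 1$, one computes
$$U^{n+1}\xi=U(U^n\xi)=(m_0\circ\theta_0)\,\bigl(\tilde m_n\circ r_\infty\bigr)\,(\xi\circ r_\infty^{n+1}),$$
and the product telescopes to $\tilde m_{n+1}\cdot(\xi\circ r_\infty^{n+1})$ directly from Definition \ref{def2}. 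For negative exponents, I would first solve $U\eta=\xi$ using \eqref{eqi4_8} to obtain $U^{-1}\xi=\tilde m_{-1}\cdot(\xi\circ r_\infty^{-1})$, and then run the parallel induction for $n=-k$, $k\geq 1$, which again reduces to the cocycle identity $\tilde m_{m+n}=\tilde m_m\cdot(\tilde m_n\circ r_\infty^m)$ packaged into Definition \ref{def2}.

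With the iterated formula in hand, the second step uses polarized unitarity. Since $\mu_\infty$ is a probability measure, the constant function $\mathbf 1$ lies in $L^2(\mu_\infty)$, and for a given $\xi\in L^2(\mu_\infty)$ the function $\bar\xi$ does as well. Applying $\langle U^n\mathbf 1,U^n\bar\xi\rangle=\langle \mathbf 1,\bar\xi\rangle$ and substituting the iterated formula yields
$$\int \tilde m_n\cdot\overline{\tilde m_n\,\bar\xi\circ r_\infty^n}\,d\mu_\infty=\int \mathbf 1\cdot\overline{\bar\xi}\,d\mu_\infty,$$
which simplifies at once to $\int|\tilde m_n|^2\,\xi\circ r_\infty^n\,d\mu_\infty=\int\xi\,d\mu_\infty$, the claimed identity.

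The step most likely to cause trouble is the bookkeeping in the case $n<0$: one must verify that the two-sided definition of $\tilde m_n$ in Definition \ref{def2} is exactly what the inversion of \eqref{eqi4_8} produces, without off-by-one errors in the range of shifts appearing in the denominator. Once the $n=-1$ case is reconciled correctly, the general case is immediate by induction, and the rest of the argument is just an application of unitarity.
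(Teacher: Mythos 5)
Your argument is correct and is exactly the route the paper intends: the paper's entire ``proof'' is the remark that the lemma follows from $U$ being an isometry (in fact unitary, by Theorems \ref{thi1} and \ref{thi5}), and your induction establishing $U^n\xi=\tilde m_n\,(\xi\circ r_\infty^n)$ for $n\in\bz$ together with $\langle U^n\mathbf 1,U^n\bar\xi\rangle=\langle\mathbf 1,\bar\xi\rangle$ just fills in the details the paper omits. The only point worth noting is that the non-singularity assumption \eqref{eqzer1} is what makes the division defining $\tilde m_n$ for $n<0$ harmless $\mu_\infty$-a.e., which the paper also uses implicitly in Definition \ref{defir}.
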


\section{The decomposition of the wavelet representation}

\begin{definition}\label{deffu}
We say that a subset $\F$ of $X_\infty$ is a {\it fundamental domain} if, up to $\mu_\infty$-measure zero:
$$\bigcup_{n\in\bz} r_\infty^n(\F)=X_\infty\quad\mbox{ and }\quad r_\infty^n(\F)\cap r_\infty^m(\F)=\ty\mbox{ for }n\neq m.$$
\end{definition}

\begin{definition}\label{defir}
For $z=(z_0,z_1,\dots)$ in $X_\infty$ define the following representation: consider the Hilbert space 
$$\H_z:=\left\{(\xi_n)_{n\in\bz} : \sum_{n\in\bz} |\xi_n|^2|\tilde m_n(z)|^2<\infty\right\},$$
with inner product
$$\ip{\xi}{\eta}_{\H_z}:=\sum_{n\in\bz}\xi_n\cj\eta_n |\tilde m_n(z)|^2.$$

Note that we avoid here the points $z\in X_\infty$ such that one of the functions $\tilde m_n(z)=0$. Since $m_0$ is non-singular, such points form a set of $\mu_\infty$-measure zero. 

Define the unitary operator 
$$U_z(\xi_n)_{n\in\bz}=(m_0\circ\theta_0\circ r_\infty^n(z)\xi_{n+1})_{n\in\bz}.$$
Define the representation of $\pi$ of $L^\infty(X)$: 
$$\pi_z(f)(\xi_n)_{n\in\bz}=(f\circ\theta_0\circ r_\infty^n(z)\xi_n)_{n\in\bz},\quad(f\in L^\infty(X)).$$

The representation $\pi_z$ is defined for bounded functions on $X$, not just essentially bounded. The $\mu$-measure zero sets will affect the individual representations $\pi_z$ but not their direct integral (see below). 
\end{definition}

\begin{theorem}\label{thdec}
In the hypotheses of Theorem \ref{th1}, there exist a fundamental domain $\F$. The wavelet representation associated to $m_0$ has the following direct integral decomposition:
$$[\H,U,\pi]=\int_{\F}^\oplus[\H_z,U_z,\pi_z]\,d\mu_\infty(z),$$
 where the component representations $[\H_z,U_z,\pi_z]$ in the decomposition are irreducible for a.e., $z$ in $\F$, relative to $\mu_\infty$.
\end{theorem}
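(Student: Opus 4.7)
My plan is to produce the decomposition in four steps: (i) build a measurable fundamental domain $\F$ for the $\bz$-action generated by $r_\infty$; (ii) write down a concrete unitary $V$ from $L^2(X_\infty,\mu_\infty)$ onto $\int_\F^\oplus\H_z\,d\mu_\infty(z)$ by ``unfolding'' along $r_\infty$-orbits; (iii) verify that $V$ intertwines $(U,\pi)$ with the fiber representations $(U_z,\pi_z)$; and (iv) prove irreducibility of almost every fiber. Several exceptional null sets will have to be handled with care along the way.

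For step (i), a periodic point $z \in X_\infty$ with $r_\infty^k z = z$ forces the base coordinate $z_0$ to be $r$-periodic with period dividing $k$, and the remaining coordinates are then determined by $z_0$. Ergodicity of $r$ together with the hypothesis that $|m_0|$ is not a.e.\ equal to $1$ prevents $\mu$ from being concentrated on a finite $r$-cycle, so the $r$-periodic points are $\mu$-null, and hence the $r_\infty$-periodic points are $\mu_\infty$-null. Off this null set the $\bz$-action is free, and a standard Borel selection theorem applied to the orbit equivalence relation then yields a Borel fundamental domain $\F$.

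For steps (ii) and (iii), define $(V\eta)(z) := (\eta(r_\infty^n z))_{n\in\bz}$. Lemma \ref{lem3} amounts to the quasi-invariance identity $d(\mu_\infty\circ r_\infty^n)/d\mu_\infty = |\tilde m_n|^2$, which combined with the disjoint decomposition $X_\infty = \bigsqcup_{n\in\bz} r_\infty^n(\F)$ (mod $\mu_\infty$) gives
\[
\int_{X_\infty}|\eta|^2\,d\mu_\infty = \sum_{n\in\bz}\int_\F |\eta(r_\infty^n z)|^2\,|\tilde m_n(z)|^2\,d\mu_\infty(z) = \int_\F \|(V\eta)(z)\|_{\H_z}^2\,d\mu_\infty(z),
\]
so $V$ is an isometry; surjectivity follows from a routine density argument on cylindrical fields over $\F$. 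The intertwining identities are then immediate from $U\eta = (m_0\circ\theta_0)(\eta\circ r_\infty)$ and $\pi(f)\eta = (f\circ\theta_0)\eta$ together with the definitions of $U_z$ and $\pi_z$: $(VU\eta)(z)_n = m_0(\theta_0(r_\infty^n z))\,\eta(r_\infty^{n+1} z) = (U_z(V\eta)(z))_n$, and likewise for $\pi(f)$.

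Step (iv) is where I expect the main work to lie. Suppose $T \in \B(\H_z)$ commutes with $U_z$ and with all $\pi_z(f)$, $f \in L^\infty(X)$. For a.e.\ $z$, the orbit points $\{\theta_0(r_\infty^n z)\}_{n\in\bz}$ are pairwise distinct --- this is exactly the aperiodicity secured in step (i), since $\theta_0(r_\infty^n z) = r^n(z_0)$ for $n \geq 0$ and $\theta_0(r_\infty^{-n} z) = z_n$ for $n \geq 0$, and both sequences fail to collide precisely when $z_0$ is not $r$-periodic. Hence Borel functions on $X$ separate the orbit points, $\pi_z(L^\infty(X))''$ contains every bounded diagonal operator in the canonical basis $(e_n)$ of $\H_z$, and $T$ must be diagonal, $Te_n = t_n e_n$. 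The relation $TU_z = U_zT$ combined with the non-vanishing of the weights $m_0\circ\theta_0\circ r_\infty^n(z)$ (which holds a.e.\ by non-singularity of $m_0$) then forces $t_n = t_{n+1}$ for all $n$, so $T$ is scalar and the fiber is irreducible. The main obstacle is the simultaneous bookkeeping of the three null sets to excise from $\F$: the $r_\infty$-periodic orbits, the orbits meeting $\{m_0 = 0\}$, and the orbits whose $\theta_0$-projection fails to be injective; ergodicity of $r$ and non-singularity of $m_0$ are exactly the hypotheses that make each of these $\mu_\infty$-null.
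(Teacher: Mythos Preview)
Your steps (ii)--(iv) are correct and coincide with the paper's argument: the map $V$ you write down is exactly the paper's intertwining isometry $\Psi$, the norm computation is the same application of Lemma~\ref{lem3}, and your irreducibility argument via diagonal operators is a slightly more detailed version of the paper's MASA/commutant argument in Lemma~\ref{pr4}. Your bookkeeping of the null sets in step~(iv) is in fact more careful than the paper's.

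The gap is in step~(i). Freeness of the $\bz$-action by $r_\infty$ does \emph{not} by itself imply the existence of a Borel (or even measurable-mod-null) fundamental domain; the orbit equivalence relation of a free $\bz$-action is typically non-smooth (think of an irrational rotation of $\bt$, or any ergodic measure-preserving transformation of a non-atomic space), and no selection theorem will produce a transversal in that case. What you actually need is that the non-singular system $(X_\infty,\mu_\infty,r_\infty)$ is \emph{totally dissipative}, and this is precisely where the full strength of the hypotheses of Theorem~\ref{th1} enters---in particular the integrability condition $\log|m_0|^2\in L^1(X)$, which your argument never touches. The paper supplies this with a concrete construction: Jensen's inequality gives $a:=\int_X\log|m_0|^2\,d\mu<0$ (strict because $|m_0|$ is not a.e.\ $1$); Birkhoff's ergodic theorem then yields $|\tilde m_n|^{2/n}\to e^a<1$ $\mu$-a.e.; Egorov produces a set $A_0$ of positive measure on which this convergence is uniform; and a Borel--Cantelli estimate shows that $\mu_\infty$-a.e.\ path visits $A_0$ only finitely often. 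From this one builds an explicit wandering set $\F=r_\infty(A_\infty)\setminus A_\infty$, where $A_\infty=\{z:z_n\in X\setminus A_0\text{ for all }n\ge0\}$, and checks that its $r_\infty$-translates exhaust $X_\infty$ using ergodicity of $r$ once more. You should either reproduce this argument or replace your appeal to selection by an independent proof of dissipativity.
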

\begin{proof}
We state the irreducibility of the component representations in a lemma:

\begin{lemma}\label{pr4}
For $\mu_\infty$ almost every $z\in X_\infty$, the objects $[\H_z,U_z,\pi_z]$ form an irreducible representation. 
\end{lemma}

\begin{proof}
One has to check that $U_z$ is unitary, $\pi_z$ is a representation and $U_z\pi_z(f)U_z^{-1}=\pi_z(f\circ r)$ for all $f\in L^\infty(X)$. All these follow from simple computations. 

To see that the representation is irreducible for $\mu_\infty$-a.e. $z$, take $z$ to be non-periodic, i.e., $r_\infty^n(z)\neq z$ for all $n\neq 0$. Then $\{\pi_z(f) : f\in L^\infty(X)\}$ forms a maximal abelian subalgebra with cyclic vector $\delta_0$ (see \cite[Corollary III.1.3]{Tak02}), where $\delta_0(n)=1$ for $n=0$, and $\delta_0(n)=0$ otherwise. Then, an operator $A$ that commutes with $U_z$ and $\pi_z$ has to be of the form $\pi_z(g)$ for some $g\in L^\infty(X)$. Since $A$ commutes with $U_z$ we have $\pi_z(g\circ r)=U_z\pi_z(g)U_z^{-1}=\pi_z(g)$. This implies that $g$ is constant on $\{r_\infty^n(z) : n\in\bz\}$, so $A$ is a multiple of the identity. 
\end{proof}

We begin the proof as in the proof of the main result in \cite{DuSi10}.

From the QMF relation and the strong invariance of $\mu$ we have
$$\int_X|m_0|^2\,d\mu=\int_X\frac{1}{\# r^{-1}(x)}\sum_{r(y)=x}|m_0(y)|^2\,d\mu=1.$$

By Jensen's inequality we have 

$$a:=\int_X \log |m_0|^2\,d\mu\leq \log\int_X|m_0|^2\,d\mu=0.$$
Since $\log$ is strictly concave, and $|m_0|^2$ is not constant $\mu$-a.e., it follows that the inequality is strict, and $a<0$.

Since $r$ is ergodic, applying Birkoff's ergodic theorem, we obtain that 
$$\lim_{n\rightarrow\infty}\frac{1}{n}\sum_{k=0}^{n-1}\log|m_0\circ r^k|^2=\int_X\log|m_0|^2\,d\mu=a,\,\mu-\mbox{ a.e.}$$
This implies that 
$$\lim_{n\rightarrow\infty}\left(|m_0(x)m_0(r(x))\dots m_0(r^{n-1}(x))|^2\right)^{1/n}=e^a<1,\,\mu-\mbox{a.e.}$$
Take $b$ with $e^a<b<1$.

By Egorov's theorem, there exists a measurable set $A_0$, with $\mu(A_0)>0$, such that $$(|m_0(x)m_0(r(x))\dots m_0(r^{n-1}(x))|^2)^{1/n}$$ converges uniformly to $e^a$ on $A_0$. (Taking $A_0$ smaller if needed we can assume $\mu(A_0)<1$.) This implies that there exists an $n_0$ such that for all $m\geq n_0$:
$$\left(|m_0(x)m_0(r(x))\dots m_0(r^{m-1}(x))|^2\right)^{1/m}\leq b\mbox{ for }x\in A_0$$
and so
\begin{equation}
|m_0(x)m_0(r(x))\dots m_0(r^{m-1}(x))|^2\leq b^m,\mbox{ for $m\geq n_0$ and all $x\in A_0$.}
\label{eqt1}
\end{equation}

Next, given $m\in\bn$, we compute the probability of a sequence $(z_n)_{n\in\bn}\in X_\infty$ to have $z_m\in A_0$. We have, using the strong invariance of $\mu$:
$$P(z_m\in A_0)=\mu_\infty\left(\{(z_n)_n\,|\, z_m\in A_0\}\right)=\int_{X_\infty}\chi_{A_0}\circ\theta_m\,d\mu_\infty$$
$$=\int_X\frac{1}{\#r^{-m}(z_0)}\sum_{r(z_1)=z_0,\dots, r(z_m)=z_{m-1}}|m_0(z_1)|^2\dots |m_0(z_m)|^2\chi_{A_0}(z_m)\,d\mu(z_0)$$
$$=\int_X|m_0(z_m)m_0(r(z_m))\dots m_0(r^{m-1}(z_m))|^2\chi_{A_0}(z_m)\,d\mu(z_m)$$
$$=\int_X|m_0(x)m_0(r(x))\dots m_0(r^{m-1}(x))|^2\chi_{A_0}(x)\,d\mu(x).$$

Then 
$$\sum_{m=1}^\infty P(z_m\in A_0)=\sum_{m\geq1}\int_X |m_0(x)m_0(r(x))\dots m_0(r^{m-1}(x))|^2\chi_{A_0}\,d\mu(x)<\infty$$
and we used \eqref{eqt1} in the last inequality.

Now we can use Borel-Cantelli's lemma, to conclude that the probability that $z_m\in A_0$ infinitely often is zero. Thus, for $\mu_\infty$-a.e. $z:=(z_n)_n$, there exists $k_z$ (depending on the point) such that $z_n\not\in A_0$ for $n\geq k_z$. In other words, if $B_0=X\setminus A_0$ then for $\mu_\infty$-a.e. $(z_n)_n$ in $X_\infty$ there exists $k_z$ such that $z_n\in B_0$ for all $n\geq k_z$.

Define now the set 
$$A_\infty:=\left\{(z_0,z_1,\dots)\in X_\infty : z_0,z_1,\dots\in B_0\right\}.$$
It is clear that if $(z_0,z_1,\dots)\in A_\infty$ then $r_\infty^{-1}(z_0,z_1,\dots)=(z_1,z_2,\dots)$ is in $A_\infty$ too. Therefore $r_\infty^{-1}(A_\infty)\subseteq A_\infty$. This means also that $A_\infty\subseteq r_\infty(A_\infty)$.

From the statements above we see that for $\mu_\infty$-a.e. $(z_0,z_1,\dots)$ in $X_\infty$ there exists $n$ such that $z_n,z_{n+1},\dots$ are in $B_0$ which means that $r_\infty^{-n}(z_0,z_1,\dots)$ is in $A_\infty$ and so $(z_0,z_1,\dots)\in r_\infty^n(A_\infty).$ Thus, up to measure zero:
$$\bigcup_{n\in\bz} r_\infty^n(A_\infty)=X_\infty.$$

We claim that also, up to measure zero, one has 
$$\bigcap_{n\in\bz}r_\infty^n(A_\infty)=\ty.$$

Suppose $(z_0,z_1,\dots)$ is in all $r_\infty^{-n}(A_\infty)$ for $n\geq 0$. Then $(r^n(z_0),r^{n-1}(z_0),\dots)\in A_\infty$ so $r^n(z_0)\in B_0$ for all $n\geq0$. Since $0<\mu(B_0)<1$ this contradicts the fact that $r$ is ergodic on $X$. 

Now take $\F:=r_\infty(A_\infty)\setminus A_\infty$. The properties of $A_\infty$ easily imply that $\F$ is a fundamental domain.

Next we check the direct integral decomposition. 

Define $\Psi:L^2(X_\infty,\mu_\infty)\rightarrow \int_{\F}^\oplus\H_z\,d\mu_\infty(z),$
$$(\Psi\xi)(z)=(\xi\circ r_\infty^n(z))_{n\in\bz},\quad(\xi\in L^2(X_\infty,\mu_\infty),z\in\F).$$
We check that $\Psi$ is an isometry. We use Lemma \ref{lem3}:
$$\|\xi\|^2=\sum_{n\in\bz}\int|\xi|^2\chi_{r_\infty^n(\F)}\,d\mu_\infty=\sum_{n\in\bz}\int|\tilde m_n|^2|\xi\circ r_\infty^n|^2\chi_{\F}\,d\mu_\infty=\int_{\F}\sum_{n\in\bz}|\tilde m_n|^2|\xi\circ r_\infty^n|^2\,d\mu_\infty$$
$$=\int_{\F}\|(\Psi\xi)(z)\|_{\H_z}^2\,d\mu_\infty(z)=\|\Psi\xi\|^2.$$

To check that $\Psi$ is onto, we can compute the inverse 
$(\Psi^{-1}(\xi(\cdot)_n)_{n\in\bz})(z)=\xi_n(r_\infty^{-n}z)$ if $z\in r_\infty^n(\F)$.

Some direct computations show that $\Psi$ intertwines the $U$-operators and the representations $\pi$.

\end{proof}

\section{Martin boundary}

The idea of associating to wavelet constructions a transfer operator $R_W$ and associated harmonic functions was pioneered by W. Lawton  in the two papers  \cite{Law90,Law91} .

The idea is that wavelets are determined by a system of numbers, often called masking coefficients. It is possible to turn these into coefficients in filter functions $m_i$, and by selecting $i=0$ (see eq \eqref{eqw2}) we get transition probabilities and a transfer operator $R_W$,

$$R_Wf(x)=\sum_{r(y)=x}W(y)f(y),\quad(x\in X).$$

     Hence $R_W$ is determined by the prescribed masking coefficients, and the question is how properties of the masking coefficients (and therefore of $R_W$) determine the wavelets. It turns out that this is decided by the spectrum of $R_W$, including the eigenspace for eigenvalue 1, which produces the harmonic functions.

As shown in \cite{DLS10}, operators in the commutant of the wavelet representation correspond to bounded $R_W$-harmonic functions. If we restrict such harmonic functions to inverse orbits of points we get harmonic functions for the random walk, or what we call below $p$-harmonic functions. The Martin boundary theory offers a way to construct such harmonic functions by means of integrals on a certain boundary. We perform these computations here to see what the $p$-harmonic functions are in this case.  
\begin{definition}\label{def1.2}
A point $x_0\in X$ is called {\it regular} if the following two conditions are satisfied:
\begin{enumerate}
	\item The sets $r^{-n}(x_0)$, $n\in\bn$ are mutually disjoint. 
	\item None of the sets $r^{-n}(x_0)$, $n\geq0$ intersect the set of zeroes of $W$.  
\end{enumerate}
Note that condition (i) means that $x_0$ is not periodic for the map $r$, i.e., $r^n(x_0)\neq x_0$ for any $n\geq1$. 

For a point $x_0\in X$, define the set $\T(x_0):=\cup_{n\geq 0}r^{-n}(x_0)$. We call this the {\it tree with root at $x_0$}. 
If $x_0$ is regular and $x\in \T(x_0)$, define $n(x_0)$ to be the unique non-negative integer such that $r^{n(x_0)}(x)=x_0$.

Let $x_0\in X$ be regular. We define now a random walk on the set $\T(x_0)$ and we construct its Martin boundary by following \cite{Saw}. 

For $x,y\in \T(x_0)$ define the transition probabilities $p(x,y)$ as follows:
\begin{equation}
p(x,y):=\left\{\begin{array}{cc}
W(y),&\mbox{ if } r(y)=x,\\
0,&\mbox{otherwise.}\end{array}\right.
\label{eqpxy}
\end{equation}

A function $u$ on $\T(x_0)$ is called {\it $p$-harmonic} if
\begin{equation}
u(x)=\sum_{y\in\T(x_0)}p(x,y)u(y),\quad(x\in\T(x_0)).
\label{eqpha}
\end{equation}

The function $p_n(x,y)$ is the $n$-th matrix power of $p(x,y)$ and represents the probability of transition from $x$ to $y$ in $n$ steps. It can be easily seen that $p_0(x,y)=\delta_{xy}$ and 
\begin{equation}
p_n(x,y)=\left\{\begin{array}{cc}
W(y)W(r(y))\dots W(r^{n-1}(y)),&\mbox{ if } r^n(y)=x,\\
0,&\mbox{otherwise.}\end{array}\right.
\label{eqpnxy}
\end{equation}

The {\it Green function} or {\it potential function } is defined by 
\begin{equation}
g(x,y):=\sum_{n=0}^\infty p_n(x,y),\quad(x,y\in \T(x_0)).
\label{eqggen}
\end{equation}
Note that, for our random walk, only one term in the sum in \eqref{eqggen} is non-zero.

The {\it Martin kernel} is defined by
\begin{equation}
K(x,y):=\frac{g(x,y)}{g(x_0,y)},\quad(x,y\in\T(x_0)).
\label{eqkgen}
\end{equation}
The denominator in \eqref{eqkgen} is non-zero because each vertex $y$ can be reached from $x_0$ eventually.

The function $K(x,\cdot)$ is bounded by some constant $C_x$ (which we will describe below). Set 
\begin{equation}
\rho(x,y)=\sum_{q\in \T(x_0)}D(q)\frac{|K(q,x)-K(q,y)|+|\delta_{qx}-\delta_{qy}|}{C_q+1},\quad(x,y\in \T(x_0)),
\label{eqrho}
\end{equation}
where $D(q)>0$ for all $q\in \T(x_0)$ and $\sum_{q\in \T(x_0)}D(q)<\infty$. Here $\delta_{xy}=1$ if $x=y$ and $\delta_{xy}=0$ otherwise.

The {\it Martin compactification} $[\widehat \T(x_0),\widehat\rho]$ is the completion of $\T(x_0)$ with the metric $\rho$. The {\it Martin boundary} is defined as $\partial \T(x_0):=\widehat\T(x_0)\setminus\T(x_0)$.

 As shown in \cite{Saw} a sequence $\{y_n\}$ in $\T(x_0)$ is Cauchy with respect to the metric $\rho$ if and only if either (i) $y_n=y$ for all $n\geq n_0$ for some $y\in \T(x_0)$ and some $n_0\in\bn$, or else (ii) $\lim_{n\rightarrow\infty}y_n=\infty$ and $\lim_{n\rightarrow \infty}K(x,y_n)$ exists for all $x\in \T(x_0)$. (Here $\lim_{n\rightarrow\infty} y_n=\infty$ means that $y_n$ leaves eventually any finite set and never returns.)

Thus the Martin boundary $\partial\T(x_0)$ is the set of equivalence classes of Cauchy sequences that satisfy the condition (ii) above. 

The maps $K(x,\cdot)$, $x\in \T(x_0)$ extend uniquely to continuous maps on $\widehat\T(x_0)$ and we use the same notation $K(x,\cdot)$ for their extensions. 
\end{definition}

\begin{theorem}\label{thmrt}
{\bf [Martin representation theorem]} For any $p$-harmonic function $u(x)\geq 0$ there exists a measure $\nu$ on $\partial\T(x_0)$ such that 
\begin{equation}
u(x)=\int_{\partial\T(x_0)}K(x,\alpha)\,d\nu(\alpha),\quad(x\in\T(x_0)).
\label{eqmrt}
\end{equation}
\end{theorem}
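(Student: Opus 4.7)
The plan is to follow the classical Martin boundary construction, exploiting the tree structure of this particular random walk to keep the analysis almost combinatorial. Because transitions only go from $x$ to $y\in r^{-1}(x)$, each vertex of $\T(x_0)$ is visited at most once and sits at a uniquely determined depth $n(y)$ below $x_0$. Consequently the Green function $g(x,y)$ reduces to the single term in \eqref{eqpnxy} whenever $y$ is a descendant of $x$, and vanishes otherwise. Regularity of $x_0$ guarantees that the shells $r^{-n}(x_0)$ are mutually disjoint and that none of the $W$-factors appearing below vanishes.

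First I would build approximating measures on the shells. For a nonnegative $p$-harmonic function $u$ and each $n\geq 0$, define a finite measure $\nu_n$ supported on $r^{-n}(x_0)$ by
$$\nu_n(\{y\}):=g(x_0,y)\,u(y),\qquad y\in r^{-n}(x_0).$$
Iterating the harmonicity identity \eqref{eqpha} $n$ times at $x_0$ gives $\nu_n(\T(x_0))=u(x_0)$. More generally, for any $x\in\T(x_0)$ and every $n\geq n(x_0)$ of $x$, the same iteration produces
$$u(x)=\sum_{y:\,r^{n-n(x)}(y)=x} p_{n-n(x)}(x,y)\,u(y)=\sum_{y\in r^{-n}(x_0)}\frac{g(x,y)}{g(x_0,y)}\,\nu_n(\{y\})=\int_{\widehat\T(x_0)} K(x,\cdot)\,d\nu_n,$$
using that $K(x,y)=0$ for any $y\in r^{-n}(x_0)$ that is not a descendant of $x$. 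Thus $u$ already admits a Martin-kernel representation against each $\nu_n$ once $n$ is large enough.

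Next I would extract a limit via weak-$*$ compactness on the Martin compactification. Each $\nu_n$ is a Borel measure on the compact metric space $\widehat\T(x_0)$ with total mass $u(x_0)$, so by Banach-Alaoglu (equivalently Helly selection) some subsequence $\nu_{n_k}$ converges weak-$*$ to a finite Borel measure $\nu$ on $\widehat\T(x_0)$. Because $\nu_n$ is supported at depth exactly $n$ and the shells $r^{-n}(x_0)$ are pairwise disjoint, every vertex $y\in\T(x_0)$ lies outside the support of $\nu_n$ for all $n\neq n(y)$; the characterization of Cauchy sequences in $\rho$ recalled after \eqref{eqrho} then forces any accumulation point of such shell-supported measures to lie in $\partial\T(x_0)$, so $\nu$ is concentrated on the Martin boundary.

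Finally, since the Martin kernels $K(x,\cdot)$ extend continuously to $\widehat\T(x_0)$ by the construction of the metric $\rho$, weak-$*$ convergence yields
$$u(x)=\lim_{k\to\infty}\int_{\widehat\T(x_0)} K(x,\cdot)\,d\nu_{n_k}=\int_{\partial\T(x_0)} K(x,\alpha)\,d\nu(\alpha),$$
which is \eqref{eqmrt}. I expect the one delicate point to be the support statement: namely, ensuring that no mass of $\nu$ gets stranded on $\T(x_0)$ itself in the limit. This is where regularity of $x_0$ is essential, since it is precisely disjointness of the shells and the bounded behavior of $K(x,\cdot)$ along them that force weak-$*$ accumulation to occur only at the boundary; the remaining steps (isometric intertwining, continuity of $K(x,\cdot)$, and the Banach-Alaoglu extraction) are standard adaptations of \cite{Saw} to this setting.
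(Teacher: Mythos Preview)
The paper does not prove this theorem at all: it is stated as the classical Martin representation theorem, imported wholesale from the general theory in \cite{Saw}, and then used as a black box in the proof of Theorem~\ref{th1.2}. So there is no ``paper's own proof'' to compare against.

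Your argument is the standard proof of the Martin representation theorem, specialized to this tree, and it is correct. A couple of small points worth tightening. First, the phrase ``$n\geq n(x_0)$ of $x$'' is garbled; you mean $n\ge n(x)$, the depth of $x$ below $x_0$. Second, the support claim can be made cleaner than you indicate: the term $|\delta_{qx}-\delta_{qy}|$ in \eqref{eqrho} shows that every vertex $y\in\T(x_0)$ is an isolated point of $\widehat\T(x_0)$, hence $\{y\}$ is clopen and $\chi_{\{y\}}$ is continuous; weak-$*$ convergence then gives $\nu(\{y\})=\lim_k\nu_{n_k}(\{y\})=0$ directly, and since $\T(x_0)$ is countable, $\nu(\T(x_0))=0$. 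This is simpler than invoking the Cauchy-sequence characterization. Finally, the parenthetical ``isometric intertwining'' in your last sentence is a stray phrase from another context and should be deleted.
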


\begin{proposition}\label{pr1.3}
With the definitions above we have:
\begin{enumerate}
	\item The Green function satisfies the equation
	\begin{equation}
g(x,y)=\left\{\begin{array}{cc}
W(y)W(r(y))\dots W(r^{n-1}(y)),&\mbox{ if } r^n(y)=x\mbox{ for some $n\geq 0$},\\
0,&\mbox{otherwise.}\end{array}\right.
\label{eqgxy}
\end{equation}
(If $n=0$ the product is defined to be 1.)
\item The Martin kernel is 
\begin{equation}
K(x,y)=\left\{\begin{array}{cc}
\frac{1}{W(x)W(r(x))\dots W(r^{n(x)-1}(x))}=:C_x,&\mbox{ if } r^n(y)=x\mbox{ for some $n\geq0$},\\
0,&\mbox{otherwise.}\end{array}\right.
\label{eqkxy}
\end{equation}
Thus $K(x,\cdot)$ is constant $C_x$ on the subtree $\T(x)$ with root at $x$ and $0$ everywhere else.
Using the notation of Definition \ref{def2} we have that $K(x,y)=\frac{1}{\tilde W_n(x)}$, if $r^n(y)=x$; if $\#r^{-1}(x)$ is constant, then the two functions $\tilde W_n$ and $\tilde m_n$ differ by a multiplicative constant (see \eqref{eqw1}).
\item A function $u$ on $\T(x_0)$ is $p$-harmonic if and only if 
\begin{equation}
u(x)=\sum_{r(y)=x}W(y)u(y),\quad(x\in\T(x_0)).
\label{eqwha}
\end{equation}
\end{enumerate}

\end{proposition}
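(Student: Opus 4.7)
The plan is to derive all three statements directly from the definitions, using the regularity hypothesis on $x_0$ exactly where it is needed.

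For (i), I would begin with $g(x,y)=\sum_{n\ge 0} p_n(x,y)$ and note from \eqref{eqpnxy} that each summand is nonzero only when $r^n(y)=x$. Condition (i) of Definition \ref{def1.2} forces the sets $r^{-k}(x_0)$ to be mutually disjoint, so every vertex of $\T(x_0)$ has a unique height $n(\cdot)$ and the equation $r^n(y)=x$ admits the single solution $n=n(y)-n(x)$. Reading off that surviving term gives \eqref{eqgxy}, with the convention that a negative $n$ just means there is no solution, hence $g(x,y)=0$.

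For (ii), I would substitute (i) into the definition \eqref{eqkgen}. The numerator $W(y)\cdots W(r^{n-1}(y))$ is the initial segment of the longer product $W(y)\cdots W(r^{n(y)-1}(y))$ appearing in $g(x_0,y)$, so telescoping leaves exactly $1/(W(x)W(r(x))\cdots W(r^{n(x)-1}(x)))=C_x$, independent of the particular descendant $y$ of $x$. When $y$ is not in the subtree $\T(x)$, already $g(x,y)=0$, hence $K(x,y)=0$. Condition (ii) of Definition \ref{def1.2} ensures that no $W$-factor in the denominator vanishes, so $C_x$ is a finite positive number. The identification $K(x,y)=1/\tilde W_n(x)$ is then purely notational, and the relation to $\tilde m_n$ in Definition \ref{def2} follows by unwinding \eqref{eqw1} under the assumption that $\#r^{-1}(\cdot)$ is constant.

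For (iii), the argument is essentially tautological: by \eqref{eqpxy}, $p(x,y)\neq 0$ forces $r(y)=x$, and any such $y$ automatically lies in $\T(x_0)$ because $r^{n(x)+1}(y)=x_0$. Substituting $p(x,y)=W(y)$ into \eqref{eqpha} yields \eqref{eqwha}, and the converse is the same identity read in reverse.

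There is no genuine obstacle here; the proof is bookkeeping with products along the backward orbit. The only point that deserves attention is the correct use of the two clauses of Definition \ref{def1.2}: clause (i) provides the uniqueness of $n$ in $r^n(y)=x$, which collapses the Green sum to a single term, while clause (ii) keeps the Martin kernel well-defined by ruling out zero denominators.
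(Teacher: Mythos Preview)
Your proposal is correct and follows essentially the same route as the paper: in (i) the regularity of $x_0$ collapses the Green sum to a single term, in (ii) the ratio $g(x,y)/g(x_0,y)$ is telescoped to $C_x$, and in (iii) the harmonicity condition is rewritten by restricting the sum to the support of $p(x,\cdot)$. If anything, you are slightly more explicit than the paper in invoking clause (ii) of Definition~\ref{def1.2} to ensure $C_x$ is finite and positive.
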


\begin{proof}
(i) follows directly from \eqref{eqpnxy}. Note that, because $x_0$ is regular, the number $n$ such that $r^n(y)=x$ is unique. For (ii), if $r^n(y)\neq x$ for all $n$ then $g(x,y)=0$ so $K(x,y)=0$. If $r^n(y)=x$, then we have
$$K(x,y)=\frac{g(x,y)}{g(x_0,y)}=\frac{W(y)\dots W(r^{n-1}(y))}{W(y)\dots W(r^{n(y)-1}(y))}=\frac{1}{W(r^n(y))\dots W(r^{n(y)-1}(y))}$$$$=\frac{1}{W(x)\dots W(r^{n(x)-1}(x))}.$$

(iii) is obtained from the following computation:
$$u(x)=\sum_y p(x,y)u(y)=\sum_{r(y)=x}p(x,y)u(y)=\sum_{r(y)=x}W(y)u(y).$$
\end{proof}

\begin{theorem}\label{th1.1}
Let $x_0$ be a regular point in $X$. Define the map $\Phi:\Omega_{x_0}\rightarrow\partial \T(x_0)$ by 
\begin{equation}
\Phi(x_0,x_1,\dots):=\{x_n\},
\label{eq1.1.1}
\end{equation}
i.e., to each sequence in $\Omega_{x_0}$ we associate the equivalence class of this sequence in $\partial\T(x_0)$. 

Then $\Phi$ is a bijective homeomorphism from $\Omega_{x_0}$ onto $\partial\T(x_0)$.

For $x\in\T(x_0)$ and $(x_0,x_1,\dots)\in\Omega_{x_0}$
\begin{equation}
K(x,\Phi(x_0,x_1,\dots))=\left\{\begin{array}{cc}
\frac{1}{W(x_1)W(x_2)\dots W(x_n)},&\mbox{ if $x=x_n$ for some $n\geq0$},\\
0,&\mbox{ otherwise.}
\end{array}\right.
\label{eqkxb}
\end{equation}
\end{theorem}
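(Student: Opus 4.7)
The plan is to verify well-definedness of $\Phi$, then bijectivity, then continuity, and to upgrade to a homeomorphism via compactness of $\Omega_{x_0}$; the formula for $K(x,\Phi(\xi))$ will follow by passing to the limit in Proposition \ref{pr1.3}(ii).

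For well-definedness, fix $\xi=(x_0,x_1,\dots)\in\Omega_{x_0}$. Regularity of $x_0$ places the vertices $x_n$ at distinct levels of $\T(x_0)$, so $\lim_{n\to\infty}x_n=\infty$ in the tree. For each $x\in\T(x_0)$ with level $\ell=n(x)$ and each $n\geq\ell$, Proposition \ref{pr1.3}(ii) gives $K(x,x_n)=C_x$ if $r^{n-\ell}(x_n)=x_\ell=x$ and $K(x,x_n)=0$ otherwise; so $K(x,x_n)$ is eventually constant in $n$, the sequence $(x_n)$ is Cauchy of type (ii) in the description preceding Theorem \ref{thmrt}, and $\Phi(\xi)\in\partial\T(x_0)$. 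Injectivity of $\Phi$ is then immediate: if two paths $(x_n)$ and $(y_n)$ represent the same boundary point, then choosing $x=x_k$ in the limit formula forces $y_k=x_k$ for every $k$.

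For surjectivity, I would take an arbitrary Cauchy sequence $(y_n)$ of type (ii) and build a preimage path from it. At each level $k\geq 0$ the finite set $r^{-k}(x_0)$ partitions $\bigcup_{m\geq k}r^{-m}(x_0)$ into the subtrees $\T(z)$, $z\in r^{-k}(x_0)$. Since $y_n\to\infty$ the sequence eventually lies in this union, and since $K(z,y_n)\in\{0,C_z\}$ converges for every $z$, there is a unique $x_k\in r^{-k}(x_0)$ with $y_n\in\T(x_k)$ for all large $n$. The nesting $\T(x_{k+1})\subseteq\T(x_k)$ forces $r(x_{k+1})=x_k$, so $(x_k)_k\in\Omega_{x_0}$, and the formula of the previous paragraph shows that $\Phi(x_0,x_1,\dots)$ represents the same boundary point as $(y_n)$.

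The space $\Omega_{x_0}$ is compact Hausdorff since it is a closed subspace of the product $\prod_{k\geq 1}r^{-k}(x_0)$ of finite discrete spaces, so to get a homeomorphism it suffices to prove that $\Phi$ is continuous. If $\xi^{(j)}\to\xi$ in $\Omega_{x_0}$, then for every $N$ the first $N+1$ coordinates of $\xi^{(j)}$ match those of $\xi$ for all sufficiently large $j$; by the explicit formula of the first paragraph we get $K(q,\Phi(\xi^{(j)}))=K(q,\Phi(\xi))$ whenever $n(q)\leq N$, so in
$$\rho(\Phi(\xi^{(j)}),\Phi(\xi))=\sum_{q\in\T(x_0)}D(q)\,\frac{|K(q,\Phi(\xi^{(j)}))-K(q,\Phi(\xi))|}{C_q+1}$$
only the tail $\{q:n(q)>N\}$ survives, and it is dominated by $\sum_{n(q)>N}D(q)\to 0$. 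Finally, the formula \eqref{eqkxb} is just the continuous extension of $K(x,\cdot)$ evaluated at $\Phi(\xi)$: by the first paragraph this extension equals $\lim_n K(x,x_n)$, which is $C_x$ exactly when $x=x_{n(x)}$, and substituting $r^j(x_n)=x_{n-j}$ rewrites $C_x=1/(W(x)W(r(x))\cdots W(r^{n-1}(x)))$ as $1/(W(x_1)\cdots W(x_n))$. The step I expect to take the most care is surjectivity, where the consistency condition $r(x_{k+1})=x_k$ must be squeezed out of the mere convergence of $K(z,y_n)$ at each vertex; the tree structure and the nesting of the subtrees $\T(z)$ make this work, but the argument really uses the compatibility of the selected vertices across different levels.
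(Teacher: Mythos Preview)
Your proof is correct and follows essentially the same approach as the paper: well-definedness from the two-valued nature of $K(x,\cdot)$ on subtrees, surjectivity by inductively selecting the unique subtree $\T(x_k)$ eventually containing $(y_n)$, continuity via the tail of $\sum_q D(q)$, and the compactness upgrade to a homeomorphism. The only differences are cosmetic: the paper treats surjectivity before injectivity, handles injectivity by directly bounding $\rho(x_n,x_n')$ below using the single term at $q=x_{n_0}$, and verifies the equivalence $\{x_n\}\sim\{y_n\}$ in the surjectivity step by an explicit $\epsilon$-estimate on $\rho(x_n,y_n)$ rather than by appealing to the kernel formula.
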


\begin{proof}
First, we show that $\Phi$ is well defined, so $\{x_n\}$ is a sequence with the property that $\lim x_n=\infty$ and $\lim K(x,x_n)$ exists for all $x\in\T(x_0)$. 

Since $x_0$ is regular, the sets $r^{n}(x_0)$ are disjoint, therefore any finite subset of $\T(x_0)$ lies in a finite union $\cup_{j\leq J}r^{-j}(x_0)$, and since $x_n\in r^{-n}(x_0)$ for all $n$, it follows that $x_n$ eventually leaves this finite set and never returns. 

For the second condition, take $x\in \T(x_0)$. Recall that $n(x)$ is the unique number such that $x\in r^{-n(x)}(x_0)$. We have two possibilities: $x_{n(x)}=x$ or not. 
In the first case we have that $x_n$ is in the subtree $\T(x)$ for all $n\geq n(x)$ so $K(x,x_n)$ is constant $C_x$, by Proposition \ref{pr1.3}. In the second case, we have that $x_n$ is not in the subtree $\T(x)$ for all $n\geq n(x)$, so $K(x,x_n)$ is constant 0. In both cases $\lim K(x,x_n)$ exists, so $\Phi$ is well defined. 

Next, we check that $\Phi$ is onto. Take a sequence $\{y_n\}$ in $\T(x_0)$ with $\lim y_n=\infty$ and such that $\lim K(x,y_n)$ exists for all $x\in\T(x_0)$.
Since $K(x,y)$ is either $C_x>0$ or $0$ depending on whether $y$ is in the subtree $\T(x)$ or not, it follows that for all $x\in\T(x_0)$, either the sequence is $y_n$ is eventually contained in the subtree $\T(x)$ or it is eventually contained in the complement of $\T(x)$; it cannot jump back and forth between $\T(x)$ and its complement. 

We will construct by induction the sequence $(x_n)_n$ in $\Omega_{x_0}$ with $\Phi(x_0,x_1,\dots)=\{y_n\}.$
The first element $x_0$ is given. Next consider the points $z_1,\dots,z_J$ in $r^{-1}(x_0)$. Since $\lim y_n=\infty$, eventually the sequence will be in the union of the subtrees $\cup_{j=1}^J \T(z_j)$. With the previous remark, one of the sets $\T(z_j)$ will contain the entire sequence $y_n$ eventually. We define $x_1$ to be the point $z_j$ with this property. 

Inductively, if $x_m$ has been defined such that the entire sequence $y_n$ is in the subtree $\T(x_m)$ eventually, we take the points in $r^{-1}(x_m)$; since $\lim y_n=\infty$, the entire sequence will lie in $\cup_{z\in r^{-1}(x_m)}\T(z)$ eventually. Since $y_n$ cannot jump back and forth between a subtree and its complement, there is one of the elements $z\in r^{-1}(x_m)$ such that the sequence $y_n$ lies in the subtree $\T(z)$ eventually. We call this point $x_{m+1}$.

To prove that $\Phi(x_0,x_1\dots)=\{y_n\}$ we just have to show that the sequences $\{x_n\}$ and $\{y_n\}$ are equivalent, i.e., $\lim \rho(x_n,y_n)=0$.
Take $\epsilon>0$. There exists a finite subset $F$ of $\T(x_0)$ such that $\sum_{q\not\in F}2D(q)<\epsilon$. For each $q\in F$, either $y_n$ is in the subtree $\T(y_n)$ eventually, or it is in the complement of $\T(y_n)$ eventually. From the definition of $\{x_n\}$ we see that $x_n$ will have exactly the same property. Thus $K(q,x_n)=K(q,y_n)$ for $n$ large enough. Also, since $x_n$ and $y_n$ go to infinity, it follows that $\delta_{qx_n}=\delta_{qy_n}$ for $n$ large enough. 
Therefore, for $n$ large, the terms in the sum in \eqref{eqrho} for $\rho(x_n,y_n)$ that correspond to $q\in F$ are all zero, the rest are bounded by $2\sum_{q\not\in F}D(q)<\epsilon$. So $\rho(x_n,y_n)<\epsilon$ for $n$ large, and therefore $\Phi(x_0,x_1,\dots)=\{y_n\}$ and $\Phi$ is onto.

To see that $\Phi$ is one-to-one, take $(x_n)\neq (x_n')$ in $\Omega_{x_0}$. Let $n_0\geq 1$ such that $x_{n_0}\neq x_{n_0}'$. Then for $n\geq n_0$,
$K(x_{n_0},x_n)=C_{x_{n_0}}$ and $K(x_{n_0},x_{n}')=0$. Therefore 
$$\rho(x_n,x_n')\geq \frac{|K(x_{n_0},x_n)-K(x_{n_0},x_n')|}{C_{x_{n_0}}+1}=\frac{C_{x_{n_0}}}{C_{x_{n_0}}+1}.$$
This implies that $\Phi(x_n)\neq \Phi(x_n')$ so $\Phi$ is one-to-one.

To prove that $\Phi$ is continuous, take $(x_n)\in\Omega_{x_0}$ and $\epsilon>0$. Take a finite subset $F$ of $\T(x_0)$ such that $2\sum_{q\not\in F}D(q)<\epsilon$. Take $n_0$ such that $F$ is contained in $\cup_{n\leq n_0}r^{-n}(x_0)$. Take $(x_n')\in V_{x_0,\dots,x_{n_0}}$ so 
$x_0'=x_0,\dots, x_{n_0}'=x_{n_0}$. Then the sequences $x_n$ and $x_n'$ are in the subtree $\T(x_{n_0})$ for $n\geq n_0$. This implies that for $q\in F$ and $n>n_0$, we have that either both $x_n$ and $x_n'$ lies in the subtree $\T(q)$ or they both lie outside $\T(q)$; also $\delta_{qx_n}=\delta_{qx_n'}=0$. Then 
$\rho(x_n,x_n')\leq \sum_{q\not\in F}2D(q)<\epsilon$ so $\widehat\rho(\Phi(x_n),\Phi(x_n'))\leq\epsilon$. This shows that $\Phi$ is continuous.

Since both spaces $\Omega_{x_0}$ and $\partial \T(x_0)$ are compact Hausdorff, it follows that $\Phi$ is a homeomorphism.

Next, we check \eqref{eqkxb}. Take $x\in\T(x_0)$ and $(x_0,x_1,\dots)\in\Omega_{x_0}$. We have 
$K(x,\Phi(x_0,x_1,\dots))=\lim K(x,x_n)$. If $x\neq x_n$ for all $n\geq0$, then $x_n$ is not in the subtree $\T(x)$ for all $n$, so $K(x,x_n)=0$ for all $n$ and therefore $K(x,\Phi(x_0,x_1,\dots))=0$. 

If $x=x_n$ for some $n$, then for $m\geq n$, $x_m$ is in the subtree $\T(x)$ so 
$$K(x,x_m)=K(x_n,x_m)=\frac{1}{W(x_n)W(r(x_n))\dots W(r^{n(x_n)-1}(x_n))}=\frac{1}{W(x_n)W(x_{n-1})\dots W(x_1)}.$$
This proves \eqref{eqkxb}.
\end{proof}

\begin{theorem}\label{th1.2}
For any $p$-harmonic function $u\geq 0$, there exists a measure $\nu$ on $\Omega_{x_0}$ such that 
\begin{equation}
u(x)=\frac{1}{W(x)W(r(x))\dots W(r^{n(x)-1}(x))}\nu(V_{r^{n(x)}(x),r^{n(x)-1}(x),\dots,x}),\quad(x\in \T(x_0)).
\label{eqbrep}
\end{equation}

\end{theorem}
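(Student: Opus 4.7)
The plan is to apply the Martin representation theorem (Theorem \ref{thmrt}) to $u$ and then transport the resulting boundary measure to $\Omega_{x_0}$ using the homeomorphism $\Phi$ of Theorem \ref{th1.1}, after which \eqref{eqbrep} will reduce to reading off the formula \eqref{eqkxb} for the Martin kernel.

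First, since $u\ge 0$ is $p$-harmonic on $\T(x_0)$, Theorem \ref{thmrt} furnishes a measure $\nu_M$ on $\partial\T(x_0)$ such that $u(x)=\int_{\partial\T(x_0)} K(x,\alpha)\,d\nu_M(\alpha)$ for all $x\in\T(x_0)$. By Theorem \ref{th1.1}, $\Phi:\Omega_{x_0}\to\partial\T(x_0)$ is a homeomorphism, so define the Borel measure $\nu$ on $\Omega_{x_0}$ as the pullback $\nu:=\nu_M\circ\Phi$, i.e.\ $\nu(A):=\nu_M(\Phi(A))$ for Borel $A\subseteq\Omega_{x_0}$. The standard change of variables gives
$$u(x)=\int_{\Omega_{x_0}}K(x,\Phi(x_0,x_1,\dots))\,d\nu(x_0,x_1,\dots).$$

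Next, I would identify the support of the integrand. By \eqref{eqkxb}, $K(x,\Phi(x_0,x_1,\dots))$ vanishes unless $x=x_n$ for some $n\ge 0$. Since $x_0$ is regular, the sets $r^{-n}(x_0)$ are pairwise disjoint, and any $(x_0,x_1,\dots)\in\Omega_{x_0}$ satisfies $x_n\in r^{-n}(x_0)$; hence $x=x_n$ can occur only for $n=n(x)$. Moreover, on the event $\{x_{n(x)}=x\}$ the coordinates $x_1,\dots,x_{n(x)}$ are forced by the compatibility $r(x_{j+1})=x_j$, namely
$$x_{n(x)}=x,\quad x_{n(x)-1}=r(x),\quad \dots,\quad x_1=r^{n(x)-1}(x),$$
which identifies the event precisely with the cylinder $V_{r^{n(x)}(x),\,r^{n(x)-1}(x),\,\dots,\,x}$ (recall $r^{n(x)}(x)=x_0$).

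On this cylinder, the integrand is a constant depending only on $x$: by \eqref{eqkxb},
$$K(x,\Phi(x_0,x_1,\dots))=\frac{1}{W(x_1)W(x_2)\cdots W(x_{n(x)})}=\frac{1}{W(r^{n(x)-1}(x))\cdots W(r(x))W(x)},$$
and it vanishes on the complement. Pulling the constant out of the integral, we obtain
$$u(x)=\frac{1}{W(x)W(r(x))\cdots W(r^{n(x)-1}(x))}\,\nu\bigl(V_{r^{n(x)}(x),\,r^{n(x)-1}(x),\,\dots,\,x}\bigr),$$
which is exactly \eqref{eqbrep}. The main subtlety is the combinatorial step of recognising that regularity forces the meeting index to be unique and equal to $n(x)$, which makes the set $\{x_{n(x)}=x\}$ a single cylinder; once that is in hand, the rest is bookkeeping.
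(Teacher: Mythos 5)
Your proposal is correct and follows essentially the same route as the paper: apply the Martin representation theorem, transport the boundary measure to $\Omega_{x_0}$ via the homeomorphism $\Phi$, and observe that $K(x,\Phi(\cdot))$ equals $C_x$ times the characteristic function of the cylinder $V_{r^{n(x)}(x),\dots,x}$. Your only addition is to spell out the regularity argument showing the meeting index must be $n(x)$ and the intermediate coordinates are forced, a detail the paper leaves implicit.
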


\begin{proof}
By Theorem \ref{thmrt}, there exists a measure $\hat\nu$ on $\partial \T(x_0)$ such that 
$$u(x)=\int_{\partial\T(x_0)}K(x,\alpha)\,d\hat\nu(\alpha),\quad(x\in\T(x_0)).$$
Define the measure $\nu$ on $\Omega_{x_0}$ by $\nu=\hat\nu\circ\Phi$. 
Then 
$$u(x)=\int_{\Omega_{x_0}}K(x,\Phi(x_0,x_1,\dots))\,d\nu(x_0,x_1,\dots).$$
But $K(x,\Phi(\cdot))$ is a multiple $C_x$ of the characteristic function of $V_{r^{n(x)}(x),r^{n(x)-1}(x),\dots, x}$. From this, \eqref{eqbrep} follows immediately.
\end{proof}

\begin{definition}\label{defadd}
A non-negative function $\nu$ on $\T(x_0)$ is called {\it additive} if 
\begin{equation}
\nu(x)=\sum_{r(y)=x}\nu(y),\quad(x\in\T(x_0)).
\label{eqadd}
\end{equation}

Denote by 
$$W^{(n)}(x)=W(x)\dots W(r^{n-1}(x)).$$

\end{definition}

\begin{corollary}\label{cor1.2}
For any $p$-harmonic function $u\geq 0$ there exists a unique additive function $\nu$ such that 
\begin{equation}
u(x)=\frac{1}{W^{(n(x))}(x)}\nu(x),\quad(x\in\T(x_0)).
\label{eqco1.2}
\end{equation}
Conversely, if $u$ is given by \eqref{eqco1.2} for some additive function $\nu$, then $u$ is $p$-harmonic. 
\end{corollary}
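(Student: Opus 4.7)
My plan is to read off the additive function directly from the measure produced by Theorem \ref{th1.2}, verify additivity via countable additivity of that measure on a natural cylinder decomposition, and handle the converse by a short direct computation.

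First, for the existence direction I would apply Theorem \ref{th1.2} to the given non-negative $p$-harmonic function $u$ to obtain a measure (call it temporarily $\hat\nu$) on $\Omega_{x_0}$ with
$$
u(x)=\frac{1}{W^{(n(x))}(x)}\,\hat\nu\bigl(V_{r^{n(x)}(x),r^{n(x)-1}(x),\dots,x}\bigr),\quad(x\in\T(x_0)),
$$
and then simply define $\nu:\T(x_0)\to[0,\infty)$ by $\nu(x):=\hat\nu(V_{r^{n(x)}(x),\dots,x})$. The representation \eqref{eqco1.2} then holds by construction, and uniqueness of $\nu$ is automatic because \eqref{eqco1.2} forces $\nu(x)=W^{(n(x))}(x)\,u(x)$.

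Next I would check that this $\nu$ is additive in the sense of Definition \ref{defadd}. The key observation is the cylinder identity
$$
V_{r^{n(x)}(x),r^{n(x)-1}(x),\dots,x}\;=\;\bigsqcup_{r(y)=x}V_{r^{n(x)}(x),\dots,x,y},
$$
which is a finite disjoint union since $r$ is finite-to-one, together with the fact that $n(y)=n(x)+1$ whenever $r(y)=x$. Applying the (countable) additivity of $\hat\nu$ to this decomposition gives $\nu(x)=\sum_{r(y)=x}\nu(y)$, i.e., \eqref{eqadd}. There is no real obstacle here: the only small point to note is that the regularity of $x_0$ makes $n(x)$ unambiguous so the definition of $\nu$ makes sense.

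For the converse I would start with an additive $\nu$, define $u(x):=\nu(x)/W^{(n(x))}(x)$, and verify \eqref{eqwha} directly, which by Proposition \ref{pr1.3}(iii) is equivalent to $p$-harmonicity. The computation hinges on the multiplicative recursion $W^{(n(y))}(y)=W(y)\,W^{(n(x))}(x)$ when $r(y)=x$ (which is valid because $W(y)\neq 0$ by regularity of $x_0$), giving
$$
\sum_{r(y)=x}W(y)\,u(y)=\sum_{r(y)=x}\frac{W(y)\,\nu(y)}{W(y)\,W^{(n(x))}(x)}=\frac{1}{W^{(n(x))}(x)}\sum_{r(y)=x}\nu(y)=\frac{\nu(x)}{W^{(n(x))}(x)}=u(x).
$$
Overall the statement is essentially a bookkeeping reformulation of Theorem \ref{th1.2}: there is no serious obstacle, and the main thing to get right is the indexing of cylinder sets along the path from $x_0$ to $x$ so that the additivity in $\T(x_0)$ corresponds precisely to finite additivity of the boundary measure on one-step refinements of cylinders.
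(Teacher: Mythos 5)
Your proposal is correct and follows essentially the same route as the paper: existence by pulling back the boundary measure of Theorem \ref{th1.2} to the function $\nu(x)=\hat\nu(V_{r^{n(x)}(x),\dots,x})$, uniqueness from $W\neq 0$ forcing $\nu(x)=W^{(n(x))}(x)u(x)$, and the converse by the same one-line computation. The only difference is that you spell out the additivity of $\nu$ via the one-step cylinder decomposition $V_{r^{n(x)}(x),\dots,x}=\bigsqcup_{r(y)=x}V_{r^{n(x)}(x),\dots,x,y}$, a verification the paper leaves implicit, and this is done correctly.
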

\begin{proof}
The existence can be obtained from Theorem \ref{th1.2} by defining the additive function 
$$\nu(x):=\nu(V_{r^{n(x)}(x),r^{n(x)-1}(x),\dots, x}),\quad(x\in\T(x_0)).$$
Uniqueness is clear since $W\neq 0$ on $\T(x_0)$. 

For the converse, we compute
$$\sum_{r(y)=x}W(y)u(y)=\sum_{r(y)=x}W(y)\frac{1}{W(y)W(r(y))\dots W(r^{n(y)-1}(y))}\nu(y)$$$$=\frac{1}{W(x)\dots W(r^{n(x)-1}(x))}\sum_{r(y)=x}\nu(y)=u(x).$$
\end{proof}

\begin{remark}\label{rem1.3}
Note that the function $\nu_0(x)=W^{(n(x))}(x)$, $x\in\T(x_0)$ is an additive function. Indeed 
$$\sum_{r(y)=x}\nu_0(y)=\sum_{r(y)=x}W^{(n(y))}(y)=\sum_{r(y)=x}W(y)W(r(y))\dots W(r^{n(y)-1}(y))$$$$=W(x)\dots W(r^{n(x)-1}(x))\sum_{r(y)=x}W(y)=\nu_0(x)\cdot 1.$$
Therefore we have the following corollary.

\end{remark}
\begin{corollary}\label{cor1.3}
Every non-negative harmonic function is the quotient of two additive functions. 
\end{corollary}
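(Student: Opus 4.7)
The plan is to combine Corollary \ref{cor1.2} with Remark \ref{rem1.3} directly. First, I would take an arbitrary non-negative $p$-harmonic function $u$ on $\T(x_0)$. By Corollary \ref{cor1.2}, there exists a (unique) additive function $\nu$ on $\T(x_0)$ such that
\[
u(x) = \frac{\nu(x)}{W^{(n(x))}(x)},\quad (x\in\T(x_0)).
\]

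Next, I would appeal to Remark \ref{rem1.3}, which asserts that the denominator function $\nu_0(x):=W^{(n(x))}(x)$ is itself additive on $\T(x_0)$. Hence $u(x) = \nu(x)/\nu_0(x)$, expressing $u$ as the quotient of two additive functions, which is the conclusion of the corollary.

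Since both of the results being invoked have already been proved, the proof is essentially a one-line composition, and there is no real obstacle. The only point worth flagging is that $\nu_0$ is strictly positive on $\T(x_0)$ (since by the regularity of $x_0$, $W$ does not vanish on $\T(x_0)$), so the quotient is well defined pointwise on the tree.
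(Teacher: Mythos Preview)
Your proof is correct and matches the paper's own approach exactly: the corollary is stated immediately after Remark \ref{rem1.3} with the phrase ``Therefore we have the following corollary,'' so the intended argument is precisely to combine Corollary \ref{cor1.2} with the additivity of $\nu_0(x)=W^{(n(x))}(x)$ established in that remark. Your observation that $\nu_0>0$ on $\T(x_0)$ by regularity is a nice clarification.
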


\begin{definition}\label{def1.12}
A function $U$ on $\T(x_0)$ is called a {\it QMF-weight} if $U\geq 0$ and 
$$\sum_{r(y)=x}U(y)=1,\quad(x\in\T(x_0)).$$ 
\end{definition}
\begin{proposition}\label{pr1.12}
There exists a one-to-one correspondence between positive additive functions and positive QMF-weights on $\T(x_0)$.

For every positive additive function $\nu$ on $\T(x_0)$ the function 
$$U_\nu(x)=\frac{\nu(x)}{\nu(r(x))},\quad(x\in\T(x_0)\setminus\{x_0\}),$$
is a QMF-weight.

Conversely, for every positive QMF-weight $U$, the function 
$$\nu(x)=\nu(x_0)U(x)U(r(x))\dots U(r^{n(x)-1}(x)),\quad(x\in\T(x_0)\setminus\{x_0\}),$$
is additive, where $\nu(x_0)$ is some fixed non-negative constant. 

\end{proposition}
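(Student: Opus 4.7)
The plan is to verify separately that each of the two assignments produces an object of the claimed type, and then check that they are mutually inverse up to the normalization constant $\nu(x_0)$.

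First I would show that, given a positive additive function $\nu$, the ratio $U_\nu(x)=\nu(x)/\nu(r(x))$ is a QMF-weight. Positivity of $\nu$ makes the division legitimate and gives $U_\nu\geq 0$; the normalization
$$\sum_{r(y)=x}U_\nu(y)=\frac{1}{\nu(x)}\sum_{r(y)=x}\nu(y)=1$$
is just the additivity of $\nu$ divided by $\nu(x)$.

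Next I would verify the reverse construction. Given a positive QMF-weight $U$ and a fixed $\nu(x_0)>0$, set $\nu(x)=\nu(x_0)\,U(x)U(r(x))\cdots U(r^{n(x)-1}(x))$. The crucial observation is that if $r(y)=x$ then $n(y)=n(x)+1$ and $r^k(y)=r^{k-1}(x)$ for $k\geq 1$, so the product defining $\nu(y)$ is the product defining $\nu(x)$ prepended by a single factor $U(y)$. Hence $\nu(y)=U(y)\nu(x)$, and therefore
$$\sum_{r(y)=x}\nu(y)=\nu(x)\sum_{r(y)=x}U(y)=\nu(x),$$
using the QMF property of $U$. This gives additivity of $\nu$; this is exactly the computation already carried out in Remark \ref{rem1.3} for the special case $U=W$, so the argument is a direct generalization.

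Finally I would check the bijection. Starting from $\nu$, passing to $U_\nu$, and then reconstructing with the constant $\nu(x_0)$ gives a telescoping product
$$\nu(x_0)\prod_{k=0}^{n(x)-1}\frac{\nu(r^k(x))}{\nu(r^{k+1}(x))}=\nu(x_0)\cdot\frac{\nu(x)}{\nu(r^{n(x)}(x))}=\nu(x),$$
because $r^{n(x)}(x)=x_0$. Conversely, if $\nu$ is built from $U$ by the product formula, the ratio $\nu(x)/\nu(r(x))$ cancels all but the top factor and recovers $U(x)$. There are no real obstacles here; the only point to be careful about is that all the denominators are nonzero, which follows from the regularity of $x_0$ (so that $W$ is nonzero on $\T(x_0)$) together with the strict positivity assumed for $\nu$ and $U$.
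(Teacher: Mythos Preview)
Your proof is correct and follows the same approach as the paper. The paper's proof is terser: it gives exactly your first computation for $\sum_{r(y)=x}U_\nu(y)=1$ and then says ``the converse follows as in Remark \ref{rem1.3},'' omitting the explicit telescoping argument you give for the bijection. Your extra verification that the two constructions are mutually inverse is a welcome addition that the paper leaves implicit.
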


\begin{proof}
We have
$$\sum_{r(y)=x}U_\nu(y)=\sum_{r(y)=x}\frac{\nu(y)}{\nu(r(y))}=\frac{1}{\nu(x)}\sum_{r(y)=x}\nu(y)=1.$$
The converse follows as in Remark \ref{rem1.3}.
\end{proof}

\begin{definition}\label{def1.14}
Let $X_r$ be the set of points $x_0\in X$ such that $r^n(x_0)$ is regular for all $n\geq0$. 
\end{definition}
\begin{remark}
Note that $X_r$ is invariant for both $r$ and $r^{-1}$.
\end{remark}

\begin{proposition}\label{pr1.16}
Let $h$ be an $R_W$-harmonic function on $X_r$, i.e., equation \eqref{eqrwh} is satisfied for $x\in X_r$. Then for each $x_0\in X_r$ there exists a unique additive function $\nu_{x_0}$ on $\T(x_0)$ such that 
\begin{equation}
h(x)=\frac{\nu_{x_0}(x)}{W(x)W(r(x))\dots W(r^{n_{x_0}(x)-1}(x))},\quad(x\in\T(x_0)).
\label{eqhnu}
\end{equation}
Moreover, the functions $\nu_{x_0}$ are related by 
\begin{equation}
\nu_{r(x_0)}(x)=W(x_0)\nu_{x_0}(x),\quad(x\in\T(x_0)).
\label{eqnurnu}
\end{equation}
Conversely, if $\nu_{x_0}$ is an additive function on $\T(x_0)$ for all $x_0\in X_r$, and the functions satisfy the relation \eqref{eqnurnu} then the function 
$h$ on $X_r$ defined by
\begin{equation}
h(x)=\frac{\nu_{r(x)}(x)}{W(x)}=\nu_x(x),\quad(x\in X_r),
\label{eqnuh}
\end{equation}
is $R_W$-harmonic on $X_r$.
\end{proposition}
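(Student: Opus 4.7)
The proof is essentially direct computation, and I would organize it in four short blocks without invoking the Martin representation (since we are not assuming $h \geq 0$ and the proposition is purely algebraic once the right definition of $\nu_{x_0}$ is guessed).

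\textbf{Step 1 (definition and uniqueness).} The natural candidate is forced by \eqref{eqhnu}: I would set
\[
\nu_{x_0}(x) := W(x)W(r(x))\cdots W(r^{n_{x_0}(x)-1}(x))\, h(x),\quad x\in\T(x_0).
\]
Because $x_0\in X_r$, every iterate $r^j(x)$ for $x\in\T(x_0)$, $0\le j<n_{x_0}(x)$, lies in the orbit $\T(x_0)\setminus\{\text{zeros of }W\}$ by condition (ii) of regularity, so the product is nonzero. This forces uniqueness of any $\nu_{x_0}$ that represents $h$ via \eqref{eqhnu}, and reduces the problem to verifying the claimed properties.

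\textbf{Step 2 (additivity via $R_W$-harmonicity).} For $r(y)=x$ in $\T(x_0)$ one has $n_{x_0}(y)=n_{x_0}(x)+1$, so
\[
\nu_{x_0}(y)=W(y)\cdot W(x)W(r(x))\cdots W(r^{n_{x_0}(x)-1}(x))\,h(y).
\]
Summing over $y\in r^{-1}(x)$ and pulling the common factor out of the sum, the remaining $\sum_{r(y)=x}W(y)h(y)$ equals $h(x)$ by $R_W$-harmonicity \eqref{eqrwh}, yielding additivity \eqref{eqadd}.

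\textbf{Step 3 (consistency relation).} For $x_0\in X_r$ and $x\in\T(x_0)$ we have $r^{n_{x_0}(x)}(x)=x_0$, hence $r^{n_{x_0}(x)+1}(x)=r(x_0)$, so $n_{r(x_0)}(x)=n_{x_0}(x)+1$. Substituting into the definition of $\nu_{r(x_0)}(x)$ pulls out exactly one extra factor $W\bigl(r^{n_{x_0}(x)}(x)\bigr)=W(x_0)$, giving \eqref{eqnurnu}.

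\textbf{Step 4 (converse).} Given additive functions $\{\nu_{x_0}\}_{x_0\in X_r}$ satisfying \eqref{eqnurnu}, I first note that \eqref{eqnurnu} specialized to $x=x_0$ gives $\nu_{r(x_0)}(x_0)=W(x_0)\nu_{x_0}(x_0)$, which makes the two formulas in \eqref{eqnuh} agree and shows $h$ is well defined. For $R_W$-harmonicity at $x\in X_r$, substitute: using $\nu_y(y)=h(y)$ and applying \eqref{eqnurnu} with ``$x_0$'' replaced by $y$ (so $r(y)=x\in\T(y)$ since $y\in r^{-1}(x)\subset\T(x)$ -- more precisely one uses \eqref{eqnurnu} on the tree $\T(y)$) gives $W(y)\nu_y(y)=\nu_x(y)$. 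Then
\[
\sum_{r(y)=x}W(y)h(y)=\sum_{r(y)=x}\nu_x(y)=\nu_x(x)=h(x)
\]
by additivity of $\nu_x$ on $\T(x)$.

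The only mildly delicate point is the \emph{bookkeeping} of indices $n_{x_0}(\cdot)$ across the different trees $\T(x_0)$, $\T(r(x_0))$, $\T(y)$ in Steps 3 and 4; I would state explicitly the identity $n_{r(x_0)}(x)=n_{x_0}(x)+1$ (and $n_y(z)=n_x(z)-1$ whenever $z\in\T(y)$, $r(y)=x$) at the start so that every product telescope is transparent. Everything else is a one-line computation using strong invariance only through \eqref{eqw2}, which is not actually needed here -- the argument uses just $R_W$-harmonicity of $h$ and additivity of the $\nu_{x_0}$.
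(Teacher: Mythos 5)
Your proof is correct, and the computations in your Steps 2--4 coincide with the paper's own verification of \eqref{eqnurnu} and of the converse direction. Where you diverge is the existence--uniqueness step: the paper does not construct $\nu_{x_0}$ by hand but simply observes that $h|_{\T(x_0)}$ is $p$-harmonic and invokes Corollary \ref{cor1.2}, i.e.\ it routes existence through Theorem \ref{th1.2} and ultimately the Martin representation theorem, whereas you define $\nu_{x_0}(x)=W(x)W(r(x))\cdots W(r^{n_{x_0}(x)-1}(x))\,h(x)$ directly and check additivity from \eqref{eqrwh}. Your route is more elementary and self-contained (no boundary measure is needed), and it also makes transparent why uniqueness holds: regularity of the points of $X_r$ keeps $W\neq 0$ along the tree, and the $r^{-1}$-invariance of $X_r$ guarantees that \eqref{eqrwh} is available at every vertex of $\T(x_0)$, which you use implicitly in Step 2 and which the paper uses implicitly in the phrase ``the restriction of $h$ to $\T(x_0)$ is $p$-harmonic.'' One caveat to your claim of extra generality: in Definition \ref{defadd} an additive function is by definition \emph{non-negative}, so if $h$ is allowed to change sign your $\nu_{x_0}$ need not be additive in the paper's sense; your argument genuinely drops the positivity hypothesis only if one also relaxes that definition, and with the paper's definitions the statement is implicitly about $h\geq 0$ (which is also what makes the appeal to Corollary \ref{cor1.2} legitimate). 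Apart from that definitional point, the two proofs buy the same conclusion, yours at the cost of one explicit telescoping product, the paper's at the cost of citing the previously established Martin-boundary corollary.
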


\begin{proof}
Since the restriction of $h$ to $\T(x_0)$ is $p$-harmonic, the existence and uniqueness of $\nu_{x_0}$ such that \eqref{eqhnu} is satisfied follows from Corollary \ref{cor1.2}.

We have for $x\in\T(x_0)$, $x$ is also in $\T(r(x_0))$ so
$$\frac{\nu_{x_0}(x)}{W(x)W(r(x))\dots W(r^{n_{x_0}(x)-1}(x))}=h(x)=\frac{\nu_{r(x_0)}(x)}{W(x)\dots W(r^{n_{r(x_0)}(x)-1}(x))}.$$
Since $n_{r(x_0)}(x)=n_{x_0}(x)+1$, and $r^{n_{x_0}(x)}(x)=x_0$, we have
$$W(x_0)\nu_{x_0}(x)=\nu_{r(x_0)}(x).$$

For the converse, we compute
$$\sum_{r(y)=x}W(y)h(y)=\sum_{r(y)=x}W(y)\frac{\nu_{r(y)}(y)}{W(y)}=\sum_{r(y)=x}\nu_{x}(y)=\nu_x(x)=\frac{\nu_{r(x)}(x)}{W(x)}=h(x).$$
\end{proof}
\begin{acknowledgements}
The authors gratefully acknowledge helpful discussions with Professors Judith Packer, and Erin Pearse. We also thank the participants at an Oberwolfach workshop in the Spring of 2011.  And we thank Oberwolfach for hospitality. 
\end{acknowledgements}
\bibliographystyle{alpha}	
\bibliography{judy}

\end{document}